\newcommand{\NN}{{\mathbb{N}}}
\newcommand{\gb}{\beta}
\newcommand{\gC}{\Gamma}
\newcommand{\gc}{\gamma}
\newcommand{\gO}{\Omega}
\newcommand{\ga}{\alpha}
\newcommand{\Sym}{\mathrm{Sym}}
\newcommand{\Alt}{\mathrm{Alt}}
\newcommand{\vol}{\mathrm{vol}}
\newcommand{\SL}{\mathrm{SL}}
\newcommand{\Aut}{\mathrm{Aut}}
\newtheorem{prop}{Proposition}[section]
\newtheorem{thm}[prop]{Theorem}
\newtheorem{lem}[prop]{Lemma}
\newtheorem{cor}[prop]{Corollary}
\theoremstyle{definition}
\newtheorem{theorem}{Theorem}
\newtheorem{remark}[theorem]{Remark}
\def\st{\ : \ }
\def\Alt{\operatorname{Alt}}
\def\Sym{\operatorname{Sym}}
\def\cL{\mathcal L}
\def\Alt{\operatorname{Alt}}
\def\NN{\mathbf N}
\def\ZZ{\mathbf Z}
\def\RR{\mathbf R}
\def\Gn{{\Lambda}}
\title{Simple groups without lattices}
\author{Bader, Caprace, Gelander and Mozes}
\date{First draft: May 2010; revised: March 2011}                                           % Activate to display a given date or no date
\begin{document}

\maketitle

\begin{abstract}
We show that the group of almost automorphisms of a $d$-regular tree does not admit lattices. As far as we know, this is the first such example among  (compactly generated) simple  locally compact groups.
\end{abstract}

\section{Introduction}

Let $G$ be a locally compact group. A \textbf{lattice} in $G$ is a discrete subgroup $\Gamma$ such that $G/\Gamma$ carries a finite $G$-invariant measure. Important and well known examples are provided by $\Gamma = \ZZ^n$ in $G= \RR^n$, or $\Gamma = \SL_n(\ZZ)$ in $G = \SL_n(\RR)$. Despite of the basic nature of the latter objects, we emphasise that the existence of a lattice in a given group $G$ should be considered as a very strong condition on that group. It notably requires $G$ to be unimodular, but this condition is however not sufficient for the existence of a lattice. This is well illustrated by nilpotent Lie groups: all of them are indeed unimodular, but many fail to contain any lattice (see Remark II.2.14 in \cite{Raghunathan}). An example due to I.~Kaplansky (see \cite[Example~2.4.7]{Rudin}) shows that a non-compact Abelian (hence unimodular) locally compact group can even fail to contain any infinite discrete subgroup; such a group contains \emph{a fortiori} no lattice.

The question of existence of lattices is especially interesting in the case where the ambient group is topologically simple (and, hence, necessarily unimodular). The fundamental case of Lie groups is well understood:  according to a well-known theorem due to A.~Borel~\cite{Borel}, \cite[XIV.14.1]{Raghunathan}, every non-compact simple Lie group contains a uniform and a non-uniform lattice. More generally, arithmetic groups provide an important source of lattices in semi-simple algebraic groups over any locally compact field. Beyond the linear world, some non-linear simple locally compact groups are also known to possess lattices. A typical example is the group $\Aut(T)^+$ of type-preserving automorphisms of a regular locally finite tree $T$, which is of index two in the full automorphism group $\Aut(T)$. The group $\Aut(T)^+$ is compactly generated and simple, and contains both uniform and non-uniform lattices.

The purpose of this note is to provide an example of a compactly generated simple locally compact group which does not contain any lattice. In order to describe it, we let $d\ge2$ be a fixed integer, $T$ be a (non-rooted) $(d+1)$-regular tree and $G$ the group of \textbf{almost automorphisms} (also sometimes called \textbf{spheromorphisms}) of $T$. An element in $G$ is defined by a triple $(A,B,\varphi)$ where $A,B\subset T$ are finite subtrees with $|\partial A|=|\partial B|$ and $\varphi:T\setminus A\to T\setminus B$ is an isomorphism between the complements, and two such triples define the same element in $G$ if and only if up to enlarging $A,B$ they coincide.

The group $G$ was first introduced by Neretin~\cite{Neretin}; it is known to be abstractly simple~\cite{Kapoudjian}. For each vertex $v \in T$, the stabilizer $\Aut(T)_v$ is a compact open subgroup of $\Aut(T)$ and it is not difficult to see that $G$ commensurates $\Aut(T)_v$. (In fact, the group $G$ can be identified with the  group of all \textbf{abstract commensurators} of $\Aut(T)_v$ or, equivalently, with the group of    \textbf{germs of automorphisms} of $\Aut(T)$, see \cite{Rover} and \cite[Cor.~E]{CDM}. This fact is however not relevant to our present purposes.)

We endow $G$ with the group topology defined by declaring that the conjugates of $\Aut(T)_v$ in $G$ form a sub-basis of identity neighbourhoods. Since $G$ commensurates $\Aut(T)_v$, it follows that the embedding $\Aut(T) \hookrightarrow G$ continuous. In this way, the group $G$ becomes a totally disconnected locally compact group containing $\Aut(T)$ as an open subgroup. In particular elements of $G$ close to the identity can be realised as true automorphisms of $T$. As a locally compact group, the group $G$ is compactly generated; in fact it contains a dense copy of the Higman--Thompson group $V_{d, 2}$, which is finitely generated (see \cite[Th.~6.10]{CDM}).

\medskip
The main result of this note is the following.

\begin{theorem}\label{thmG}
$G$ does not contain any lattice.
\end{theorem}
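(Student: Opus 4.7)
My plan is to argue by contradiction, using the open subgroup $\Aut(T)\le G$ as an intermediary where lattices are better understood. Suppose $\Gamma\le G$ is a lattice. Since $\Aut(T)$ is open in $G$, the restricted $\Aut(T)$-action on the finite-measure space $G/\Gamma$ decomposes it as a countable disjoint union of $\Aut(T)$-orbits of the form $\Aut(T)/(\Aut(T)\cap g\Gamma g^{-1})$, one per double coset in $\Aut(T)\backslash G/\Gamma$. Finiteness of the total $G$-invariant measure forces each such orbit to have finite $\Aut(T)$-invariant volume, whence $\Aut(T)\cap g\Gamma g^{-1}$ is a lattice in $\Aut(T)$ for every $g\in G$, and
\[
\vol(G/\Gamma) \;=\; \sum_{\Aut(T)g\Gamma \,\in\, \Aut(T)\backslash G/\Gamma}\vol\bigl(\Aut(T)/(\Aut(T)\cap g\Gamma g^{-1})\bigr).
\]
In particular, $\Gamma_0:=\Gamma\cap\Aut(T)$ is itself a lattice in $\Aut(T)$.

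The next step is to invoke a uniform positive lower bound on covolumes of lattices in $\Aut(T)$, of Kazhdan--Margulis type, which is available because $\Aut(T)^+$ is a simple compactly generated tdlc group acting on a regular tree. Combined with the volume identity above, such a bound forces the double coset space $\Aut(T)\backslash G/\Gamma$ to be finite: $\Gamma$ has only finitely many orbits on the countably infinite $G$-set $G/\Aut(T)$.

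The crux of the proof is then to derive a contradiction from this finiteness, exploiting the specific structure of $G$ as the group of almost automorphisms. The plan is to use the dense Higman--Thompson subgroup $V_{d,2}\le G$ to produce an infinite family of elements of $G$ lying in pairwise distinct $(\Aut(T),\Gamma)$-double cosets: the combinatorial freedom of almost automorphisms — the ability to arbitrarily rearrange the branches of $T$ outside any finite subtree, which is precisely the feature distinguishing $G$ from $\Aut(T)$ — provides infinitely many genuinely distinct conjugations of $\Aut(T)$, while discreteness of $\Gamma$ prevents them from collapsing into finitely many double cosets. The main obstacle is this last step: carrying out the combinatorial construction uniformly for every discrete $\Gamma\le G$ satisfying the constraints established in the first two steps, which requires a delicate interplay between the topology of $G$, the discreteness of $\Gamma$, and the non-automorphism structure of elements of $V_{d,2}$ under conjugation of $\Aut(T)$.
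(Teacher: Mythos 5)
Your first step is sound: since $\Aut(T)$ is open in $G$, a lattice $\Gamma\le G$ would indeed intersect every conjugate of $\Aut(T)$ in a lattice of $\Aut(T)$, with the covolumes of the orbit pieces summing to $\vol(G/\Gamma)$. The paper opens with exactly this observation, but applied to a different open subgroup (see below). The proposal breaks down at the second step: there is \emph{no} Kazhdan--Margulis type uniform lower bound on covolumes of lattices in $\Aut(T)$ for a regular tree. On the contrary, by the work of Bass--Kulkarni and Bass--Lubotzky on tree lattices, $\Aut(T)$ contains (uniform) lattices of arbitrarily small covolume -- e.g.\ fundamental groups of a single edge of groups with larger and larger finite vertex groups. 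This is precisely one of the well-known ways in which $\Aut(T)$ differs from a simple Lie group. Consequently the finiteness of $\Aut(T)\backslash G/\Gamma$ does not follow from your volume identity, and the whole second half of your argument has no foundation. The third step is in any case only a plan, not a proof: you do not exhibit the infinitely many double cosets, and it is not even clear that ``$\Gamma$ has finitely many orbits on $G/\Aut(T)$'' is self-contradictory -- you would need an actual construction, and you acknowledge you do not have one.

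The paper's proof chooses the intermediate open subgroup much more carefully: not $\Aut(T)$ (which \emph{does} contain lattices, so no contradiction can come from that side alone), but the subgroup $O=\bigcup_n O_n$ with $O_n=\Aut(T\setminus B_n(e_0))$, which is an increasing union of \emph{compact} open subgroups. A lattice in such a group intersects each $O_n$ in a finite subgroup whose index is controlled by the covolume; projecting to $O_n/U_n\cong\Sym(k_n)$ one gets a subgroup of $\Sym(k_n)$ of index at most $c\cdot a_n$, and then finite permutation group theory (Jordan's theorem on primitive groups containing a $p$-cycle, and Babai's bounds on orders of primitive groups) forces this subgroup to contain large alternating groups, hence forces $\Gamma$ to meet arbitrarily small identity neighbourhoods $U_m$ -- contradicting discreteness. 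If you want to salvage your approach, the essential fix is to replace $\Aut(T)$ by an open subgroup of $G$ that cannot itself contain a lattice, and the paper shows that establishing this for $O$ is where all the real work lies.
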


Fix an edge $e_0$ in $T$.
By $B_n(e_0)$ we denote the open ball of radius $n$ in $T$ around $e_0$. Thus the boundary sphere $\partial B_n(e_0)$ is a set of vertices, consisting of $k_n=2d^n$ elements,
meeting every connected component of the graph $T\setminus B_n(e_0)$.
 Consider the subgroup $O\le G$ consisting of all elements represented by triples
 $(B_n(e_0),B_n(e_0),\varphi)$.
Thus $O$ can be identified with the increasing union
$$
 O=\bigcup_{n\in\NN}O_n \hspace{1cm} \text{where} \hspace{1cm} O_n=\text{Aut}(T\setminus B_n(e_0)).
$$

The groups $O_n$ are compact and open in $G$ and $O$ is their union. Essential to our argument is that $O$ is open in $G$. Therefore, if $L$ is a (uniform) lattice in $G$ then its intersection with $O$ is a (uniform) lattice in $O$. Thus Theorem~\ref{thmG} is a consequence of the following.

\begin{theorem}\label{thmO}
$O$ does not contain any lattice.
\end{theorem}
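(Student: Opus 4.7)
I would argue by contradiction: suppose $L \le O$ is a lattice, and set $L_n := L \cap O_n$. The first observation is that $L$ is locally finite, because $O = \bigcup_n O_n$ is an ascending union of compact open subgroups and $L$ is discrete, so each $L_n$ is finite and $L = \bigcup_n L_n$. To gain rigidity, I would identify a compact open subgroup of $O$ that meets $L$ trivially. Let $W_n \le O$ be the pointwise stabiliser of $B_n(e_0)$ in the genuine tree automorphism group $\Aut(T)$ (viewed as a subgroup of $O$). The $W_n$ form a decreasing family of compact open subgroups with $\bigcap_n W_n = \{1\}$, so the descending chain of finite groups $L \cap W_n$ eventually stabilises at $\{1\}$: fix $n_0$ with $L \cap W_{n_0} = \{1\}$. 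The kernel $V_n := \ker(O_n \twoheadrightarrow \Sym(\partial B_n(e_0))) \cong \Aut(T_d^+)^{k_n}$ is contained in $W_n$ (any element of $V_n$ extends by the identity on $B_n$ to a tree automorphism fixing $B_n$), so $L \cap V_n = \{1\}$ for every $n \ge n_0$, and $L_n$ embeds injectively into $S_{k_n}$ for all such $n$.

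The plan is now to sandwich $|L_n|$ between two matching estimates. The upper estimate comes from the disjointness of the cosets $\{\ell W_{n_0}: \ell \in L_n\}$ inside $O_n$ (which follows from $L_n \cap W_{n_0}=\{1\}$) and gives $|L_n|\cdot\mu(W_{n_0}) \le \mu(O_n)$. The lower estimate comes from the formula $\mu(O/L) = \sup_n \mu(O_n)/|L_n|$, valid for an ascending union of compact open subgroups, combined with finite covolume: $|L_n| \ge \mu(O_n)/\mu(O/L)$. These bracket $|L_n|$ within a bounded ratio of $[O_n:W_{n_0}]$. A key intermediate step -- requiring additional work that I expect to use the specific arithmetic of the indices $[O_{n+1}:O_n] = (dk_n)!/((d!)^{k_n} k_n!)$ -- is to show that the supremum in the covolume formula is actually attained, so $L$ is uniform. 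Granting this, fix $n^*\ge n_0$ with $O = O_{n^*}L$; then for all $n \ge n^*$ one has the exact equality $[L_n:L_{n^*}] = [O_n:O_{n^*}]$.

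Under the natural embedding $O_{n-1}\hookrightarrow O_n$, the inclusion induced on symmetric-group quotients is the imprimitive wreath embedding $S_{k_{n-1}}\hookrightarrow S_d\wr S_{k_{n-1}}\subset S_{k_n}$. The index equality above forces $L_n\le S_{k_n}$ to act transitively on the coset space $O_n/O_{n-1}$, which canonically identifies with the set of partitions of $\{1,\ldots,k_n\}$ into $k_{n-1}$ blocks of size $d$ arising from the tree structure. Iterating at every intermediate level $n^* \le m < n$ produces a highly transitive action of $L_n$ on iterated $d$-block partitions at every scale.

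The main obstacle, and the combinatorial heart of the proof, is to derive a contradiction from this rigid structural picture: a chain $L_{n^*}\subset L_{n^*+1}\subset\cdots$ of finite subgroups of $S_{k_{n^*}}\subset S_{k_{n^*+1}}\subset\cdots$ (via the imprimitive wreath embeddings), with prescribed sizes $|L_n|\asymp (k_n!)^{(d-1)/d}$ and the multilevel transitivity just described. One natural approach is to apply the classification of primitive permutation groups of large order to argue that this transitivity eventually forces $L_n$ to contain the alternating group $A_{k_n}$, whose order $k_n!/2$ vastly exceeds the prescribed $|L_n|$ for $n$ large, giving the contradiction. As a sanity check, in small cases one can verify by hand that no subgroup of $S_{k_n}$ of the prescribed order and iterated-transitivity type exists, so an explicit finite-stage obstruction is a viable fallback.
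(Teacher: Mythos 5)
Your setup (intersecting $L$ with the $O_n$, locating $n_0$ with $L\cap U_{n_0}=\{1\}$, and the two-sided volume estimate giving $[\Sym(k_n):L_n]\le \mu(O/L)\cdot[O_n:U_0]/[\Sym(k_n):1]^{-1}$, i.e.\ an index bound of the form $c\cdot d^{O(k_n)}$) is exactly the paper's Section~2. But the proof then rests on two steps that do not hold up. First, the ``key intermediate step'' that the supremum $\mu(O/L)=\sup_n \mu(O_n)/|L_n|$ is \emph{attained}, so that $L$ is uniform, is precisely the hard part of the problem and you give no argument for it beyond an expectation about ``the specific arithmetic of the indices.'' The paper treats the cocompact case separately (Section~3) exactly because it admits an elementary proof via Jordan's theorem; the entire technical content of the general case (Sections~4--6, using Babai's bounds on orders of primitive groups) is devoted to lattices that are \emph{not} assumed uniform. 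Reducing to the uniform case without proof is therefore not a simplification of the argument but an omission of its core.

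Second, even granting uniformity, your endgame is based on a false expectation. A subgroup of $\Sym(k_n)$ of index at most $c\cdot d^{k_n}$ need not be primitive and need not contain $\Alt(k_n)$: for instance $\Alt(Z_1)\times\Alt(Z_2)$ with $|Z_1|=|Z_2|=k_n/2$ has index roughly $2^{k_n+2}$ in $\Sym(k_n)$, which satisfies your bound, and transitivity on the tree's block partitions is compatible with such imprimitive configurations. This is why the paper's Proposition on subgroups of small index has \emph{two} alternatives (a single large alternating group, or a product of $d$ alternating groups covering most of $K_n$), and why in both alternatives the contradiction is obtained not by comparing $|L_n|$ with $k_n!/2$ but by exhibiting a nontrivial element of $L$ whose action fixes $K_{n-2}$ pointwise, hence lies in $U_{n-2}$, contradicting discreteness. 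Your proposal has no analogue of this discreteness-based contradiction mechanism, and the size comparison you propose cannot close the argument. (A minor further slip: the prescribed order is $|L_n|=k_n!/(c\,a_n)$ with $a_n=2(d!)^{2(d^n-1)/(d-1)}$, i.e.\ $k_n!$ divided by an exponential in $k_n$, not $(k_n!)^{(d-1)/d}$.)
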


Although this fact will not be necessary for our argument, we point out that the group $O$ is itself topologically simple (see \cite[Lem.~6.9]{CDM}), and thus constitutes another example of a simple locally compact group without lattices. However, in contrast to the group $G$, the group $O$ is not compactly generated.
%Theorem~\ref{thmO}   implies in fact that none of the compactly generated simple locally compact groups considered in \cite[Theorem~D]{CDM} contains a lattice.

\medskip

We will prove Theorem~\ref{thmO} by way of contradiction. We assume henceforth that $\gC\le O$ is a lattice.
Our argument can be outlined as follows. By construction $O$ is a union of an ascending chain of compact open subgroups $O_n$. Moreover, for $n$ large the profinite group $O_n$ maps onto a full symmetric group $\Sym(k_n)$ of very large degree $k_n$. The image of the intersection $\Gamma \cap O_n$ maps onto a subgroup whose index in $\Sym(k_n)$ is controlled by the covolume of $\Gamma$ in $O$. A precise estimate of that index will be established in the first subsection below. This leads us to studying subgroups of finite symmetric groups of `relatively small index'. In the general case, we shall invoke results due to L.~Babai~\cite{Babai81,Babai82} which are relevant to the latter question in order to complete our study. However, in some special cases, it is possible to complete the proof of Theorem~\ref{thmO} using exclusively elementary methods. This is notably the case if one assumes that $\Gamma$ is cocompact in $O$, or alternatively  if one assumes that $T$ is trivalent (\emph{i.e.} $d=2$). The latter two special cases are presented in separate sections by way of illustration. The reader who is interested in contemplating a single example of a compactly generated simple locally compact group without lattices can read through Sections~2 to 5 below, avoiding the technical complications arising in the general discussion carried in Section~6.

\subsection*{Acknowledgement}
We thank the referee for careful reading of the manuscript and detailed comments which helped in improving the presentation of this  paper.

%----------------------------

\section{Some notations and bounds}\label{sec:notations:bound}

We fix an edge $e_0$ in $T$, and denote by $B_n(e_0)$ the open ball of radius $n$ around $e_0$.
We denote its boundary by $K_n=\partial B_n(e_0)$ and view it as a set of vertices in $T$.
Its cardinality is denoted by
$$
k_n=|K_n|=2d^n.
$$
The group $O_0$ as defined above coincides with the stabilizer of $e_0$ in $\Aut(T)$.
We set $U_0=O_0$ and, for every $n\in\NN$ we denote by $U_n\le \Aut(T)$ the pointwise stabilizer of $B_n(e_0)$.
Then $\{ U_n\}$ form a base of identity neighbourhoods for the topology on $G$.
Furthermore, $U_n$ is a normal subgroup of $O_n$ and $O_n/U_n\cong \Sym(K_n)\cong \Sym(k_n)$.
We denote by $\pi_n:O_n\to\Sym(k_n)$ the quotient map, and by
$$
 A_n=\pi(U_0)=U_0/U_n \cong \Aut(B_n(e_0)).$$
The order of the finite group $A_n$ is denoted by
 $$ a_n=|A_n|=2(d!)^{2\cdot\frac{d^n-1}{d-1}}.
$$

Assume that $\gC$ is a lattice in $O$. We let
$$
 \gC_{O_n}=\gC\cap O_n \hspace{1cm} \text{and} \hspace{1cm} \gC_n=\pi_n(\gC_{O_n})\le \Sym(k_n).
$$
Note that since $\gC$ is discrete, there is some $n_0\in\NN$ such that $\gC\cap U_n=\{1\}$ and hence $\gC_n\cong\gC_{O_n}$, for all $n\ge n_0$.
Let $\mu$ be the Haar measure on $O$, normalized by $\mu(U_0)=1$ and let
$$
 c=\vol(O/\gC) \hspace{1cm} \text{and} \hspace{1cm} c_n=\vol(O_n/\gC_{O_n}).
$$
Then the sequence $c_n$ is non-decreasing and tends to
$c$ as $n \to \infty$.
\medskip

For $n\ge n_0$ we have the following volume computation:
\begin{equation}\label{Eq:c_n}
\begin{array}{rcl}
 c_n & = & \displaystyle \vol(O_n/\gC_{O_n})=\frac{\mu (O_n)}{|\gC_{O_n}|}=\frac{[O_n:O_0]}{|\gC_n|}=
 \frac{[O_n:U_n]}{[U_0:U_n]|\gC_n|}=\frac{1}{a_n}\frac{|\Sym(k_n)|}{|\gC_n|}\\
 & =& \displaystyle \frac{[\Sym(k_n):\gC_n]}{a_n}.
 \end{array}
\end{equation}
In particular
\begin{equation}\label{Eq:c}
 [\Sym(k_n):\gC_n]\le c\cdot a_n.
\end{equation}

The latter inequality is the crucial estimate that will be confronted with the discreteness of $\Gamma$ in order to establish a final contradiction. More precisely, in most cases we shall prove that this condition on the index of $\Gamma_n$ will force $\Gamma$ to meet the identity neighbourhood $U_m$ for $m$ arbitrarily large.

%\begin{rem}
%Note that if $\gC$ is a uniform lattice, then a compact fundamental domain for $\gC$ is contained in $O_n$ for all $n$ sufficiently large. In particular, the sequence $c_n$ is eventually constant and the proof becomes much simpler in this case, as we will see in Section \ref{sec:compact} below.
%\end{rem}

%-------------------------------------

\section{The cocompact case}\label{sec:compact}

The purpose of this section is to give a simple proof for the inexistence of \emph{uniform} lattices in $O$.
The proof will make use of the following Lemma.

\begin{lem}\label{lem:2-transitive}
A subgroup of the symmetric group $\Sym(k)$, generated by two prime cycles $\ga,\gb$ whose respective supports intersect nontrivially but are not contained in one another, acts doubly transitively on its support.
\end{lem}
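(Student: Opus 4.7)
The plan is to first show that $G := \langle \alpha, \beta \rangle$ is transitive on $S := \mathrm{supp}(\alpha) \cup \mathrm{supp}(\beta)$, then to pick a carefully chosen $x_0 \in S$ and show that the stabilizer $G_{x_0}$ is transitive on $S \setminus \{x_0\}$. Transitivity of $G$ on $S$ is immediate: writing $A = \mathrm{supp}(\alpha)$ and $B = \mathrm{supp}(\beta)$, the cycles $\alpha, \beta$ are transitive on $A$ and $B$ respectively, and the assumption $A \cap B \neq \emptyset$ links the two orbits.

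For $2$-transitivity, I would take $x_0 \in A \setminus B$, which is nonempty since $A \not\subseteq B$. Since $\beta$ fixes $x_0$, it lies in $G_{x_0}$; its transitivity on $B$ then places all of $B$ in a single $G_{x_0}$-orbit. What remains is to show that each $x_1 \in A \setminus (B \cup \{x_0\})$ lies in the same $G_{x_0}$-orbit as $B$.

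The key step will be to produce, for each such $x_1$, an integer $k$ such that the conjugate $g := \alpha^k \beta \alpha^{-k}$ (a cycle with support $\alpha^k(B)$) satisfies (i) $g \in G_{x_0}$, equivalently $\alpha^{-k}(x_0) \notin B$, and (ii) $x_1 \in \alpha^k(B)$. Identifying $A$ with $\ZZ/p\ZZ$ (where $p = |A|$) so that $\alpha$ becomes translation by $1$, and writing $V := A \cap B$ and $U := A \setminus B$, conditions (i)--(ii) become $k \in x_0 - U$ and $k \in x_1 - V$ respectively. These two subsets of $\ZZ/p\ZZ$ have sizes $|U|$ and $|V|$ summing to $p$, so they either meet nontrivially or else partition $\ZZ/p\ZZ$. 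In the latter case $V$ would need to be invariant under the nonzero translation by $x_0 - x_1$, which is impossible for a nonempty proper subset of a cyclic group of prime order.

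Once such $k$ is secured, $g$ is a cycle whose support $\alpha^k(B)$ contains both $x_1$ and all of $B \setminus A$ (the latter being nonempty since $B \not\subseteq A$ and pointwise fixed by $\alpha^k$). Hence $x_1$ and some $y \in B \setminus A \subseteq B$ lie in the same $g$-orbit, hence in the same $G_{x_0}$-orbit, which completes the argument. I expect the arithmetic step producing $k$ to be the main obstacle: this is precisely where the primality of $|A|$ is used in an essential way.
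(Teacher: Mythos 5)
Your proof is correct, and it takes a genuinely different route from the paper's. The paper argues directly on pairs: it first claims that any pair of points of $\mathrm{Supp}(\ga)\cup\mathrm{Supp}(\gb)$ can be moved, by a power of one cycle followed by a power of the other, to a ``split'' pair with one coordinate in $\mathrm{Supp}(\ga)\setminus\mathrm{Supp}(\gb)$ and the other in $\mathrm{Supp}(\gb)\setminus\mathrm{Supp}(\ga)$, and then asserts transitivity on the set of split pairs; both steps are left largely to the reader. You instead fix $x_0\in A\setminus B$ and prove that the stabilizer $G_{x_0}$ is transitive on the remaining points, reducing everything to the existence of a $k$ with $k\in(x_0-U)\cap(x_1-V)$ in $\BZ/p\BZ$. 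Your counting step is sound: $|x_0-U|+|x_1-V|=p$, so failure of intersection forces $x_1-V=x_0-V$, i.e.\ invariance of the nonempty proper subset $V$ under the nonzero translation $x_1-x_0$, which is impossible in a group of prime order; and the resulting conjugate $\ga^k\gb\ga^{-k}$ is a single cycle linking $x_1$ to $B\setminus A\neq\varnothing$ inside $G_{x_0}$. The payoff of your version is that it isolates precisely where the primality of the cycle lengths is used (the paper's sketch leaves this implicit, and indeed the statement fails for general cycles), at the cost of some explicit bookkeeping; the paper's version is shorter but requires the reader to supply the transitivity-on-split-pairs argument, which is of comparable difficulty to your arithmetic step.
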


\begin{proof}
Applying a power of either $\ga$ or $\gb$ followed by a power the other one we can map any pair of points in
$\text{Supp}(\ga)\cup\text{Supp}(\gb)$ to a pair $\{x_1, x_2\}$ satisfying
$$
 x_1\in\text{Supp}(\ga)\setminus\text{Supp}(\gb) \hspace{1cm} \text{and} \hspace{1cm}
 x_2\in\text{Supp}(\gb)\setminus\text{Supp}(\ga).
$$
Observing that  the group $\langle\ga,\gb\rangle$ acts transitively on
$$
 \big(\text{Supp}(\ga)\setminus\text{Supp}(\gb)\big)\times\big(\text{Supp}(\gb)\setminus\text{Supp}(\ga)\big),
$$
the desired conclusion follows.
\end{proof}

We now come back to the setting of Theorem~\ref{thmO} and suppose  that $\gC\le O$ is a uniform lattice. By fixing a relatively compact fundamental domain $\gO$ for $\gC$ in $O$, and recalling that $O=\bigcup_n O_n$ and the $O_n$ are compact, open and ascend to $O$, one sees that $\gO\subset O_n$ for all large $n$ and hence the sequence $c_n=\vol(O_n/\gC_{O_n})$ is eventually constant and equal to $c$. In particular $c$ is rational. By Equation (\ref{Eq:c_n}), there is some $n_1$ such that
$$
[{\Sym}(k_n):\gC_n]=c\cdot a_n=c\cdot 2(d!)^{2\cdot\frac{d^n-1}{d-1}},
$$
for all $n>n_1$.
Therefore, for any prime $p\le k_n$ which does not divide the right hand side, the group $\gC_n$ must contain some $p$-Sylow subgroup of $\Sym(k_n)$. Notice that if $p \geq k_n/2$, such a $p$-Sylow is cyclic and generated by a single $p$-cycle (the equality case $p=k_n/2$ is excluded since $k_n/2 = d^n$ is not prime). From the Prime Number Theorem, it follows that for a sufficiently large integer $k$, the interval $[k/2, k]$ contains at least three distinct primes. Therefore, there is some $n>\max\{ n_0,n_1\}$ such that the interval $[k_n/2,k_n]$ contains two primes, say $p,q$, such that $p+3 \leq q$. Conjugating one $p$-cycle in $\gC_n$ by some $q$-cycle one produces two $p$-cycles satisfying the condition of Lemma~\ref{lem:2-transitive}. We can further ensure that the union of the supports of these two $p$-cycles is a set of cardinality~$k \geq p+3$.
We now invoke a theorem of Jordan \cite{Jordan} (see also \cite[Theorem 13.9]{Wielandt}) ensuring that \emph{a primitive group of degree $k$ containing a $p$-cycle, with $p+3\leq k$, is either the full symmetric or the alternating group}.
It follows  that $\gC_n$ contains the alternating group on some subset $X_n$ of size $k > k_n/2+2$.

By the pigeonhole principle, the set $X_n$ contains  two pairs of vertices $x_i,y_i,~i=1,2$ such that $d_T(x_i,y_i)=2$, \emph{i.e.} the vertices $x_i$ and $y_i$ admit a common ``father'' in $K_{n-1}$. The permutation
$$
 \overline\gc=(x_1,y_1)(x_2,y_2)
$$
is an element of $\text{Alt}(X_n)$ and hence belongs to $\gC_n$. However its pre-image $\gc\in\gC_{O_n}$ acts trivially on $K_{n-1}$ and is thus contained in $U_{n-1}$. Since $n>n_0$ we get a contradiction.
\qed

%-------------------------------------

\section{The proof of Theorem~\ref{thmO}}

In this section we will prove Theorem~\ref{thmO}, relying on the following finite-group-theoretic proposition,
which will be proven in the next sections.

\begin{prop} \label{prop:index}
For all $c,d>0$, and $0<\alpha<1$, there exists an integer $n_1$ (depending on $c,d$ and $\alpha$) such that
for every finite set $K$ with $|K|\geq n_1$, every subgroup $\Lambda<\Sym(K)$ satisfying
the index bound
$$[\Sym(K):\Lambda] \leq c\cdot d^{|K|}$$
enjoys the following (non-exclusive) alternative. Either:
\begin{enumerate}[(1)]

  \item there exists a subset $Z\subset K$ with $|Z|>\frac{|K|}{d} + 2$ and $\Alt(Z)< \Lambda$; or

  \item there exist $d$ disjoint subsets $Z_1,Z_2,\ldots,Z_d\subset K$ with
  $$|\bigcup_{i=1}^d Z_i| > (1-\alpha)|K|, \quad \mbox{and} \quad \prod_{i=1}^d \Alt(Z_i)< \Lambda. $$
\end{enumerate}
\end{prop}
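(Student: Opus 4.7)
My plan is a structural analysis of $\Lambda$ as a subgroup of $\Sym(K)$ of small index, combining the multinomial bound on subgroups preserving a partition with the bounds of Babai \cite{Babai81, Babai82} on orders of primitive permutation groups. First, I would examine the orbit decomposition. Let $\Omega_1, \ldots, \Omega_s$ be the $\Lambda$-orbits on $K$, with $m_i = |\Omega_i|$ and $p_i = m_i/|K|$. Since $\Lambda \leq \prod_i \Sym(\Omega_i)$, one has
\[
  c \cdot d^{|K|} \;\geq\; [\Sym(K) : \Lambda] \;\geq\; \binom{|K|}{m_1, m_2, \ldots, m_s},
\]
and Stirling converts this into a Shannon-entropy bound $H(p_1, \ldots, p_s) \leq \log d + o(1)$. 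Careful book-keeping then shows that all but a fraction $\tfrac{\alpha}{2}$ of $K$ is concentrated in orbits of size exceeding some threshold $\beta|K|$, and that the number of such ``large'' orbits is bounded in terms of $c, d, \alpha$.

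Next I would reduce to the primitive quotient on each large orbit $\Omega_i$: any $\Lambda$-invariant block system with blocks of size $b \geq 2$ places $\Lambda|_{\Omega_i}$ inside a wreath product, contributing a large multinomial factor to the index; this together with the preceding step severely restricts admissible imprimitivity, and after finitely many refinements we reach the primitive case. On the primitive quotient, Babai's theorems \cite{Babai81, Babai82} assert that a primitive subgroup of $\Sym(m)$ not containing $\Alt(m)$ has order only quasi-polynomial in $m$, which is far too small compared with $m!/(c \cdot d^m)$ once $m$ is large. Thus, provided $n_1$ is taken large enough, the primitive quotient of $\Lambda|_{\Omega_i}$ on every large orbit contains the full alternating group.

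The crucial step is then to lift this alternating action back into $\Lambda$ itself. Setting $N_i = \ker(\Lambda \to \Sym(K \setminus \Omega_i))$, the image of $N_i$ on $\Omega_i$ is a normal subgroup of the image of $\Lambda$ on $\Omega_i$, which contains $\Alt(\Omega_i)$. Simplicity of $\Alt(m)$ for $m \geq 5$ leaves only two possibilities for that image, and a further application of the index bound rules out the trivial alternative. Since the $N_i$ corresponding to distinct orbits commute elementwise, this yields $\prod_i \Alt(\Omega_i) \leq \Lambda$. Finally, if some $\Omega_i$ exceeds $|K|/d + 2$ in size, alternative~(1) is realised with $Z = \Omega_i$; otherwise the at-most-$d$ large orbits provided by the first step (padded with empty sets to bring the count to exactly $d$) produce alternative~(2).

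The main obstacle will be the lifting argument: extracting $\Alt(\Omega_i)$ inside $\Lambda$ rather than merely in a quotient, and simultaneously across distinct orbits so as to obtain a direct product. The simplicity of the alternating group and the tight numerics of the index bound should make this possible, but the execution is technically delicate, especially once the imprimitivity reductions are combined with the orbit analysis. A secondary challenge is the quantitative entropy accounting needed to fit the precise thresholds $|K|/d + 2$ and $(1-\alpha)|K|$ appearing in the statement.
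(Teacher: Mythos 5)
Your outline follows the same route as the paper: orbit decomposition controlled by a multinomial/entropy bound, reduction modulo imprimitivity, Babai's theorems on primitive groups, and a lifting into $\Lambda$ via simplicity of alternating groups. However, two of your steps would fail as written.

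First, the imprimitivity reduction does not land you on the orbits. A block system on an orbit $\Omega_i$ consisting of a \emph{few large blocks} contributes only a modest factor to the index: two blocks of size $y/2$ in an orbit of size $y$ place $\Lambda|_{\Omega_i}$ in a wreath product of index roughly $2^{y}$ in $\Sym(\Omega_i)$, which is perfectly compatible with $[\Sym(K):\Lambda]\le c\cdot d^{|K|}$. So $\Lambda|_{\Omega_i}$ may be genuinely imprimitive and $\Alt(\Omega_i)\not\le\Lambda$; what the index bound excludes is a system with \emph{many} blocks. Moreover, Babai applied to the ``primitive quotient'' (the action on the set of maximal blocks) yields the alternating group on the \emph{set of blocks}, which is not $\Alt(Z)$ for any $Z\subset K$. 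The correct move is to pass to the \emph{finest} block system, show the number of blocks is bounded so each block still has size $\ge\varepsilon_0|K|$, apply Babai to the primitive action of the block stabilizer \emph{on each block}, and take the $Z$'s to be these blocks; your lifting via simplicity is essentially right but must be run on the blocks. In particular, realizing alternative (1) with $Z=\Omega_i$ is wrong when $\Omega_i$ is imprimitive.

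Second, the final dichotomy is asserted rather than proved. Nothing in your first step gives ``at most $d$'' large orbits --- the bound there is of order $1/\varepsilon$, far larger than $d$. For $d=2$ one could a priori have three sets each of size $|K|/3$: none exceeds $|K|/2+2$, yet the union of the two largest is only $2|K|/3$, so neither alternative holds with your assignment. What rules such configurations out is exactly the entropy computation you defer as ``secondary'': if every set in the collection has size at most $|K|/d+2$ and the $d$ largest cover at most $(1-\alpha)|K|$, then the multinomial coefficient attached to the partition has exponential growth rate $\tilde d^{\,|K|}$ for some $\tilde d>d$ (the entropy of such a distribution strictly exceeds $\log_2 d$, even after accounting for the uncovered $\delta$-fraction of $K$), contradicting the index bound on $\prod \Alt(Z)$. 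This computation is where the thresholds $|K|/d+2$ and $(1-\alpha)|K|$ actually come from; it is the heart of the deduction, not an afterthought.
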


\begin{remark}
For the proof given below for Theorem~\ref{thmO} we will need Proposition~\ref{prop:index} for some $\alpha$ satisfying
$\alpha<1/d^2$ (in fact, a careful inspection of the proof below will reveal that it is enough to assume $\alpha<\frac{d-1}{d^2}$, 
but we choose not to obscure the proof with unnecessary detailed arguments).
This is important to note, as we will give in the next section an independent proof of Proposition~\ref{prop:index} for $d=2$ and $\alpha=0.24$.
\end{remark}

\begin{proof}[Proof of Theorem~\ref{thmO}]

We assume by contradiction that $\Gamma$ is a lattice in $O$.
We use the notations and bounds given in Section~\ref{sec:notations:bound}.
By Equation~(\ref{Eq:c}), we have
$$
[\Sym(k_n):\Gamma_n]\le c\cdot a_n = c\cdot 2(d!)^{2\cdot\frac{d^n-1}{d-1}} \leq c'\cdot d^{2d^n}=c'\cdot d^{k_n}
$$
for some appropriate constant $c'$ and every $n\ge n_0$.
We fix $\alpha < 1/{d^2}$.
We apply Proposition~\ref{prop:index} to the set $K=K_n$
and the group $\Lambda=\Gamma_n$ with the constants $c',d$ and $\alpha$ as above.

Fix $n\ge\max\{n_0+2,n_1\}$ (the constant $n_0$ was defined in Section~\ref{sec:notations:bound} and $n_1$ is given by Proposition~\ref{prop:index}).
Then $\Gamma\cap U_{n-2}=\{1\}$
and we infer that $\Gamma_n$ satisfies one of the alternatives (1) or (2) in Proposition~\ref{prop:index}.

Assume first that $\Gamma_n$ satisfies (1).
Then either there is a vertex $u\in K_{n-1}$ with three neighbours $x_1,x_2,x_3\in Z$,
in which  case the $3$-cycle  $(x_1,x_2,x_3)$ belongs to $\Alt(Z)<\Gamma_n$,
or there are two vertices $u,v\in K_{n-1}$ with $x_1,x_2\in Z$ neighbours of $u$ and $y_1,y_2\in Z$ neighbours of $v$.
In the latter case we have  $(x_1,x_2)(y_1,y_2) \in \Alt(Z)<\Gamma_n$.
Observe that the preimages in $\Gamma_{O_n}$ of both of these elements actually belong to $U_{n-1}$, which
gives as a contradiction, as $U_{n-1}<U_{n-2}$ and $\Gamma\cap U_{n-2}=\{1\}$.

Next suppose that $\Gamma_n$ satisfies the alternative (2).
As in the previous case, if we had for some $1\leq i\leq d$, 
two vertices $u,v\in K_{n-1}$ with $x_1,x_2\in Z_i$ neighbours of $u$ and $y_1,y_2\in Z_i$ neighbours of $v$,
we would get a contradiction.
So, for each $i$ we have at most one element in $K_{n-1}$ with two neighbours in $Z_i$.
Call these {\bf flexible} elements of $K_{n-1}$. There are at most $d$ such.
Call an element of $K_{n-2}$ {\bf flexible} if it has a flexible neighbour in $K_{n-1}$.
Clearly, there are at most $d$ flexible elements in $K_{n-2}$ too.

Say that a vertex $u \in B_k(e_0)$ with $k<n$  is \textbf{fully-covered} 
if the following conditions holds: for every vertex $v\in K_n=\partial B_n(e_0)$ so that $u$ belongs to the geodesic connecting $v$ and $e_0$,
we have $v \in \bigcup_{i=1}^d Z_i$.
Since $|K_n-\bigcup_{i=1}^d Z_i| < \alpha k_n$, the number of  vertices  which are not fully covered in each level $K_j$ is strictly smaller than $\alpha k_n$.
Consider the level $K_{n-2}$.
Since $k_{n-2}=k_n/d^2$ and $\alpha<1/d^2$, we get that there are at least $(1/d^2-\alpha)k_n$ fully covered vertices in $K_{n-2}$.
Picking $n$ large enough so $(1/d^2-\alpha)k_n \geq d+2$ we get at least two vertices in $K_{n-2}$ which are fully covered but not flexible.

For a given vertex $v\in K_{n-2}$ which is fully covered and not flexible we can construct an element of $\Aut(B_n(e_0))$ 
which, as a permutation of $K_n$, belongs to  $\prod_{i=1}^d \Sym(Z_i)$, by picking two neighbours of $v$ in $K_{n-1}$, switching them,
and switching all their neighbors in $K_n$, preserving the sets $Z_i$.
Observe that this element is trivial on $K_{n-2}$.
Having two vertices in $K_{n-2}$ which are fully covered and not flexible, we can compose two such automorphisms, and get an element
of $\prod_{i=1}^d \Alt(Z_i)<\Lambda = \Gamma_n$.
It follows that such an element can be realized as an element $\gamma\in \Gamma$.
Since by construction, $\gamma\in U_{n-2}$, 
we get that $\Gamma\cap U_{n-2} \neq \{1\}$. This is a contradiction.
\end{proof}

\section{The case $d=2$}

In this section we prove Proposition~\ref{prop:index} in the special case $d=2$ and $\alpha=0.24$,
%The proof we give is considerably shorter and less involved than the proof we give in the next section for the general case, avoiding Babai's results~\cite{Babai81,Babai82} mentioned in the introduction, invoking instead the Prime Number Theorem.
%This more elementary argument can not be applied to larger values of  $d$, however the main goal of the paper, namely
%giving an example of a compactly generated simple group which do not admit any lattice, is achieved.
%Recall that when we assume that the tree $T$ is trivalent or, equivalently, that $d=2$, we have
%$$
% k_n=2^{n+1}~~~~~\text{and}~~~~~a_n=(2!)^{2\cdot\frac{2^n-1}{2-1}}=\frac{1}{4}2^{k_n},
%$$
%and Inequality (\ref{Eq:c}) takes the form:
%$$
% [\Sym(k_n):\gC_n]\le \frac 1 4 c \cdot 2^{k_n} \leq c \cdot 2^{k_n}.
%$$
which in this case reads:

\begin{prop} \label{prop:index,d=2}
For each $c>0$, there exists an integer $n_1$ (depending on $c$) such that
for every finite set $K$ with $|K|\geq n_1$, every subgroup $\Lambda<\Sym(K)$ with
%the index bound
$$[\Sym(K):\Lambda] \leq c\cdot 2^{|K|}$$
enjoys the following (non-exclusive) alternative. Either
\begin{enumerate}[(1)]

  \item there exists a subset $Z\subset K$ with $|Z|>\frac{|K|}{2} + 2$ and $\Alt(Z)< \Lambda$, or

  \item there exists two disjoint subsets $Z_1,Z_2 \subset K$ with
  $$|Z_1 \cup Z_2| > 0.76|K|, \quad \mbox{and} \quad \Alt(Z_1)\times \Alt(Z_2)< \Lambda. $$
\end{enumerate}
\end{prop}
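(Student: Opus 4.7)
The plan is to adapt the cycle-based strategy of Section~\ref{sec:compact}: find $p$-cycles in $\Lambda$ via the PNT and the index bound, and combine them through Lemma~\ref{lem:2-transitive} and Jordan's theorem to produce large alternating groups. The new ingredient is a preliminary analysis of the $\Lambda$-orbit structure on $K$. Let $O_1, \ldots, O_r$ be the orbits with sizes $n_1 \ge \cdots \ge n_r$; since $\Lambda$ preserves this partition, $\Lambda \le \prod_i \Sym(O_i)$, and hence
\[
\binom{k}{n_1, \ldots, n_r} \le c \cdot 2^k.
\]
By Stirling, this forces the Shannon entropy $H(n_1/k, \ldots, n_r/k) \le 1 + o(1)$, and an elementary case analysis shows that, for $k$ large, one of two alternatives holds: either (I) $n_1 > k/2 + 2$ (a single dominant orbit, which is automatic when $r \ge 3$) or (II) $r = 2$ with $n_1, n_2 \in [k/2 - 2, k/2 + 2]$ (two balanced orbits). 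Any other orbit distribution violates the entropy bound.

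Case (II) is settled by a subgroup-index argument. The Young subgroup $\Sym(O_1) \times \Sym(O_2)$ has index $\binom{k}{n_1} \approx 2^k/\sqrt{k}$ in $\Sym(K)$, so $\Lambda$ has index at most $c\sqrt{k}$ in the Young subgroup; intersecting with $\Alt(O_1) \times \Alt(O_2)$, one obtains $[\Alt(O_1) \times \Alt(O_2) : \Lambda \cap (\Alt(O_1) \times \Alt(O_2))] \le c\sqrt{k}$. For $k$ large, this is strictly less than $k/2$, the minimal proper subgroup index of $\Alt_{k/2} \times \Alt_{k/2}$ (realised by a point stabiliser in one factor). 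Therefore $\Alt(O_1) \times \Alt(O_2) \le \Lambda$, yielding alternative~(2) with $Z_i = O_i$ and $|Z_1 \cup Z_2| = k > 0.76\, k$.

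In case (I), the argument of Section~\ref{sec:compact} is applied to the dominant orbit. Setting $\Lambda_1 = \Lambda \cap \Sym(O_1)$, the index bound and orbit structure give $[\Sym(O_1) : \Lambda_1] \le c' \cdot 2^{\beta n_1}$ with $\beta = (1 - H(n_1/k))/(n_1/k)$. For $n_1/k$ not too close to $1$, one has $\beta < 1/(2\ln 2)$, the threshold at which the PNT counting of Section~\ref{sec:compact} produces primes $p < q$ in $(n_1/2, n_1]$ with $p + 3 \le q$ coprime to $[\Sym(O_1):\Lambda_1]$. These provide $p$- and $q$-cycles in $\Lambda_1$; Lemma~\ref{lem:2-transitive} and Jordan's theorem yield $\Alt(Z) \le \Lambda_1$ for some $Z \subseteq O_1$ with $|Z| > n_1/2 + 2$. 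An enlargement argument---conjugating $\Alt(Z)$ by $g \in \Lambda$ produces $\Alt(gZ) \le \Lambda_1$, and two alternating groups $\Alt(X), \Alt(X')$ with $|X \cap X'| \ge 2$ generate $\Alt(X \cup X')$---combined with the transitivity of $\Lambda$ on $O_1$, then yields $\Alt(O_1) \le \Lambda$. Since $n_1 > k/2 + 2$, alternative~(1) follows with $Z = O_1$.

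The main obstacle is the borderline of case (I) in which $n_1/k$ is very close to $1$: then $\beta$ tends to $1$, and the ratio of primes in $(n_1/2, n_1]$ that might divide $[\Sym(O_1):\Lambda_1]$ to the total number of primes in that range approaches $2\ln 2 > 1$, so the naive counting fails. This (nearly) transitive regime has to be treated separately. One option is to invoke a Babai-type bound on primitive permutation groups, forcing any primitive subgroup of $\Sym(O_1)$ of index at most $c \cdot 2^{n_1}$ to contain $\Alt(O_1)$. A more elementary route is an imprimitivity analysis: any nontrivial block system compatible with the index bound must have exactly two blocks of size $|O_1|/2$, after which one passes to the index-$2$ block-preserving subgroup and reduces effectively to case (II).
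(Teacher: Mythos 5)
Your orbit-based architecture is genuinely different from the paper's (which starts from Sylow subgroups and prime cycles and derives the orbit/block structure only at the end), and parts of it are sound: the entropy dichotomy is provable, and your Case (II) argument -- index at most $O(\sqrt{k})$ in the Young subgroup versus minimal proper subgroup index $\approx k/2$ of $\Alt(O_1)\times\Alt(O_2)$ -- is correct and clean. But there is a genuine gap, and it sits exactly at the heart of the proposition: the regime $n_1/k$ close to $1$, which includes the case of a \emph{transitive} $\Lambda$ of index at most $c\cdot 2^k$. As you yourself compute, once $n_1/k \gtrsim 0.94$ the exponent $\beta$ exceeds $1/(2\ln 2)$ and the PNT count over the interval $(n_1/2,\,n_1]$ fails, so your main mechanism for producing a large alternating group breaks down precisely where it is most needed. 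You offer two unexecuted "options" for this case, and neither closes the gap as stated. Invoking Babai is legitimate mathematics (it is what Section~6 does for general $d$) but defeats the stated purpose of this section, and in any case only disposes of the \emph{primitive} subcase. The "more elementary route" via imprimitivity does not address the real difficulty: to find prime cycles at all in this regime one must widen the interval to something like $[0.3 n_1,\, n_1]$ (where the product of primes is $\approx e^{0.7 n_1} > 2^{n_1}$), but then a single $p$-cycle has support of size possibly well below $n_1/2$, and one must assemble several disjoint $p$-cycles from a full $p$-Sylow, link them together using a $q$-cycle to build $2$-transitive groups, and only then apply Jordan. That assembly -- the content of Lemma~\ref{clm:p,q}, Corollary~\ref{cor:2trans} and the block analysis of the resulting supports $K_j$ -- is the entire substance of the paper's proof and is absent from yours.

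Two smaller issues. First, in Case (I) you pass silently from "$\Alt(Z)$ is contained in the restriction of $\Lambda$ to $O_1$" to "$\Alt(Z)\le\Lambda$"; the restriction is a quotient, not a subgroup, so an extra step (comparing $|\Alt(O_1)|$ with $|\Sym(K\setminus O_1)|$ to kill the kernel) is needed. Second, your claim that the imprimitive subcase "reduces effectively to Case (II)" skips the verification that the two blocks actually union to more than $0.76|K|$; in the paper this is the final counting step bounding the leftover block by $0.24 k$ via the estimate $6(ak)!(bk)!(rk)! \le 0.49^k k!$. The verification does go through in your setting (the index bound forces $n_1 = k - o(k)$ when the dominant orbit splits into two blocks), but it must be done.
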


Our proof relies on the Prime Number Theorem which can be formulated as
$$
\lim_{n \to \infty}  \big({\prod_{p~\text{prime}~<n}p}\big)/{e^n} = 1.
$$
\begin{lem}\label{clm:p,q}
Fix $c>0$, and let $\Lambda<\Sym(k)$ be a subgroup with $[\Sym(k):\Lambda]\leq c\cdot 2^k$.
For all sufficiently large $k$, there are two primes $p,q\in [0.3k,k]$ such that $\Lambda$ contains a copy of the $p$-Sylow as well as of the $q$-Sylow subgroup of $\Sym(k)$. Furthermore we may assume that $q \geq p+3$ and that $q\ne k/2+1$.
\end{lem}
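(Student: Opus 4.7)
The plan is to combine Sylow's theorem with the Prime Number Theorem as stated. The starting observation is the standard fact that a subgroup $\Lambda<\Sym(k)$ contains a Sylow $p$-subgroup of $\Sym(k)$ if and only if $p$ does not divide the index $[\Sym(k):\Lambda]$: denoting by $p^a$ the $p$-part of $|\Sym(k)|$, the index condition is equivalent to $p^a \mid |\Lambda|$, and then Sylow's theorem applied inside $\Lambda$ produces a subgroup of order $p^a$, which is automatically a $p$-Sylow of $\Sym(k)$. So the problem reduces to exhibiting two primes $p,q\in[0.3k,k]$ that do \emph{not} divide $[\Sym(k):\Lambda]$ and satisfy the side conditions $q\geq p+3$ and $q\neq k/2+1$.

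The heart of the argument is a counting estimate on such primes. Let $P$ denote the set of primes in $[0.3k,k]$ and let $P_{\mathrm{bad}}\subseteq P$ be those dividing the index. Since the elements of $P_{\mathrm{bad}}$ are pairwise coprime, their product divides the index, so
$$
\prod_{p\in P_{\mathrm{bad}}} p \;\leq\; [\Sym(k):\Lambda] \;\leq\; c\cdot 2^k.
$$
On the other hand, the Prime Number Theorem in the form $\sum_{p\leq x}\log p = x(1+o(1))$ (equivalent to the formulation recalled before the lemma) yields $\sum_{p\in P}\log p = 0.7\,k + o(k)$, and since $0.7 > \log 2$, there exists a fixed $\delta>0$ with $\prod_{p\in P} p \geq 2^k e^{\delta k}$ for all sufficiently large $k$. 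Dividing the two estimates I would obtain
$$
\prod_{p\in P\setminus P_{\mathrm{bad}}} p \;\geq\; \frac{e^{\delta k}}{c},
$$
which grows exponentially in $k$. As each factor is at most $k$, this forces the bound $|P\setminus P_{\mathrm{bad}}| \geq (\delta k - \log c)/\log k$, which in particular tends to infinity.

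With an unboundedly growing supply of ``good'' primes in $[0.3k,k]$, the final choice is routine: for all $k$ large enough, $P\setminus P_{\mathrm{bad}}$ contains at least four primes. One of them can be discarded to avoid the forbidden value $k/2+1$, leaving three; among any three primes above $2$, the largest and smallest differ by at least $4$ (since consecutive primes there differ by $\geq 2$), yielding a pair $p<q$ with $q\geq p+3$. I expect the only nontrivial point to be the numerical slack $e^{0.7}>2$, which is the quantitative input that makes the exponent $0.3$ in the prime range just sufficient to beat the index bound $c\cdot 2^k$; everything else is a bookkeeping application of PNT and Sylow's theorem.
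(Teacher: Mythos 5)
Your proof is correct and follows essentially the same route as the paper: reduce via Sylow's theorem to showing that at least a few primes in $[0.3k,k]$ fail to divide the index, then note that the product of the ``bad'' primes divides the index while the Prime Number Theorem makes the product of all primes in that range roughly $e^{0.7k}$, and $e^{0.7}>2$ beats the bound $c\cdot 2^k$. The only difference is cosmetic: you explicitly count that the number of good primes grows like $k/\log k$, whereas the paper argues directly that having at most three good primes already forces the index to exceed $c\cdot 2^k$.
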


\begin{proof}
The Prime Number Theorem ensures that the product of all primes smaller than $k$ is approximately $e^k$. For a prime $p$ denote by ${i_p}$ the multiplicity of $p$ in $k!$. Our claim is that for some $p,q$ as above
$|\Lambda|$ is divisible by $p^{i_p}q^{i_q}$. Indeed, if this is not the case, then the index of $\Lambda$ in $\Sym(k_n)$ is divisible by the product of all, except perhaps three or less, primes in the interval $[0.3k,k]$ which is roughly $e^{0.7k}$. However $e^{0.7}>2$ contradicting the fact that $[\Sym(k_n):\Lambda]<c\cdot 2^{k}$.
\end{proof}

We shall make use of the following consequence of Lemma~\ref{lem:2-transitive}.

\begin{cor}\label{cor:2trans}
Let $\ga_1,\ldots,\ga_t$ be prime cycles in $\Sym(k)$ such that for every $1<i\le t$ there is some $1\le j<i$ such that $\ga_i,\ga_j$ satisfy the condition on $\ga,\gb$ in Lemma~\ref{lem:2-transitive}. Then the group $\langle \ga_1,\ldots, \ga_t \rangle$ is doubly transitive on its support.
\end{cor}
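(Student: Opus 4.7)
The plan is to proceed by induction on $t$, with base case $t=2$ being a direct application of Lemma~\ref{lem:2-transitive}. Set $S_i=\mathrm{Supp}(\alpha_i)$, $G_t=\langle\alpha_1,\ldots,\alpha_t\rangle$, $Y=S_1\cup\cdots\cup S_{t-1}$ and $X=Y\cup S_t$. For the inductive step I assume that $G_{t-1}$ acts doubly transitively on $Y$, and I fix some $j<t$ such that the pair $\alpha_j,\alpha_t$ satisfies the hypothesis of Lemma~\ref{lem:2-transitive}.

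First I would check that $G_t$ is transitive on $X$: $G_{t-1}$ is transitive on $Y$, the prime cycle $\alpha_t$ is transitive on $S_t$, and $S_j\cap S_t\neq\emptyset$, so the two orbits are linked into one. To upgrade to double transitivity I would use the standard criterion that a transitive permutation group is $2$-transitive as soon as one point stabilizer is transitive on the complement. The right choice of fixed point is any $x\in S_j\setminus S_t$, which is nonempty by the non-containment hypothesis of Lemma~\ref{lem:2-transitive} and lies in $Y$. Then $(G_{t-1})_x\subseteq(G_t)_x$ is transitive on $Y\setminus\{x\}$ by the inductive hypothesis, while $\alpha_t\in(G_t)_x$ (because $x\notin S_t$, so $\alpha_t$ fixes $x$) can be used to move any $z\in S_t\setminus Y$ into $S_j\cap S_t\subseteq Y\setminus\{x\}$ by an appropriate power; note that the $\alpha_t$-orbit of $z$ is $S_t$, which avoids $x$. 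This fuses the pieces $S_t\setminus Y$ and $Y\setminus\{x\}$ into a single $(G_t)_x$-orbit, yielding the desired $2$-transitivity.

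The only subtle point is the choice of the stabilized vertex: an arbitrary $x\in Y$ would not work, since one needs $\alpha_t$ itself to belong to $(G_t)_x$ in order to pass from $S_t\setminus Y$ into $Y$. The non-containment $S_j\not\subseteq S_t$ built into the hypothesis of Lemma~\ref{lem:2-transitive} is exactly what supplies a usable $x\in S_j\setminus S_t$, and once this $x$ is fixed the rest of the argument is routine.
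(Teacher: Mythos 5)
Your proof is correct. It follows the same overall inductive scheme as the paper, but the inductive step is carried out by a different mechanism. The paper's proof rests on the (unproved, but true) observation that if $A$ and $B$ are each doubly transitive on their supports and those supports meet in at least two points, then $\langle A\cup B\rangle$ is doubly transitive on its support; it then applies this with $A=\langle\ga_1,\dots,\ga_{t-1}\rangle$ and $B=\langle\ga_j,\ga_t\rangle$, the latter being doubly transitive on $S_j\cup S_t$ by Lemma~\ref{lem:2-transitive}, and the overlap containing all of $S_j$. You instead adjoin only the single cycle $\ga_t$ and verify $2$-transitivity directly via the point-stabilizer criterion, which requires choosing the stabilized point $x$ in $S_j\setminus S_t$ so that $\ga_t$ itself lies in $(G_t)_x$; this is exactly where the non-containment hypothesis of Lemma~\ref{lem:2-transitive} enters, and you correctly flag it as the delicate point. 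Your route is somewhat more self-contained (you never need to re-invoke Lemma~\ref{lem:2-transitive} for the pair $(\ga_j,\ga_t)$ in the inductive step, only the nonemptiness of $S_j\cap S_t$ and of $S_j\setminus S_t$), at the cost of a slightly longer verification; the paper's route isolates a reusable merging principle for doubly transitive groups but leaves its proof to the reader. Both arguments implicitly assume $t\ge 2$ (a single prime $p$-cycle with $p\ge 3$ is not doubly transitive on its support), which is harmless since that is the only case used in the paper.
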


\begin{proof}
Given two subgroups $A, B \leq \Sym(k)$ which are doubly transitive on their respective support, if these supports intersect in a subset of cardinality at least two, then it follows that $\langle A \cup B\rangle$ is doubly transitive on its own support. In view of this observation, the desired statement follows from Lemma~\ref{lem:2-transitive} by induction on $t$.
\end{proof}

\begin{proof}[Proof of Proposition \ref{prop:index,d=2}]
Let $K$ be a set such that $|K|=k$ and $[\Sym(K):\Lambda]<c\cdot 2^k$.
Suppose that $k\ge 100$ and is large enough so that the conclusion of Lemma~\ref{clm:p,q} holds, and let
$p,q>0.3k$ be two primes such that $q \geq p+3$ and $\Lambda$ contains a $p$-Sylow and a $q$-Sylow subgroups of $\Sym(K)$, as ensured by Lemma~\ref{clm:p,q}. Note that every $p$-Sylow subgroup of $\Sym(K)$ contains $\lfloor \frac{k}{p} \rfloor$ cycles of length $p$ with disjoint supports, and the same applies to $q$.  Let $c_p^1,\ldots,c_p^r,~r= \lfloor \frac{k}{p} \rfloor\le 3$ (resp. $c_q^1,\ldots,c_q^s,~s= \lfloor \frac{k}{p} \rfloor$) be disjoint $p$-cycles (resp. $q$-cycles) of $\Lambda$.

Fix $j \in \{1, \dots, s\}$. We claim that there is a subgroup $\Lambda_j \leq \Lambda$ generated by $p$-cycles, which is $2$-transitive on its support $K_j = \mathrm{Supp}(\Lambda_j)$ and such that $K_j$ contains $\mathrm{Supp}(c^j_q)$.
Since the complement of the union of the supports of the $c_p^i$'s is of size strictly smaller than $p<q$, the support of $c_q^j$ overlaps with the support of some $c_p^i$. By conjugating $c_p^i$ by a suitable power of $c_q^j$ we obtain a partner with which $c_p^i$ satisfies the condition of Lemma~\ref{lem:2-transitive}. In fact, denoting by $\alpha_1, \dots, \alpha_q$ the various conjugates of $c_p^i$ under the elements of $\langle c_q^j \rangle$ and upon reordering the $\alpha_k$'s appropriately, we obtain a sequence of $p$-cycles which satisfies the hypotheses of Corollary~\ref{cor:2trans} and such that the union of the supports of the $\alpha_k$'s contains $\mathrm{Supp}(c^j_q)$. This proves the claim.

In view of Corollary~\ref{cor:2trans}, we may further assume, upon enlarging  $\Lambda_j$ if necessary, that every $p$-cycle in $\Lambda$ is either contained in $\Lambda_j$ or has support disjoint from $K_j$.

Applying the aforementioned result of C.~Jordan (see section~\ref{sec:compact}) to $\Lambda_j$ and recalling that $|K_j| \geq q \geq p+3$, we deduce that $\Lambda_j$ contains the alternating group on its support. In fact, since $\Lambda_j$ is generated by odd cycles, we have $\Lambda_j = \Alt(K_j)$.

From the property that every $p$-cycle in $\Lambda$ is either contained in $\Lambda_j$ or has support disjoint from $K_j$, it follows that for all $j, j' \in \{1, \dots, s\}$, either $K_j = K_{j'}$ or $K_j\cap K_{j'} =\varnothing$.

If now some $K_j$ has cardinality at least $k/2+2$, then we are done showing the first alternative in Proposition~\ref{prop:index,d=2}
(Notice that this happens for instance in case $s=1$, and for this particular case we made the assumption $q\ne\frac{k}{2}+1$).
Otherwise we have $s >1$ and the sets $K_j$ are pairwise disjoint, and each of them has cardinality at most $k/2+1$.
Again, the property that  every $p$-cycle in $\Lambda$ is either contained in $\Lambda_j$ or  has support disjoint from $K_j$ implies that the sets $K_1, K_2$ and $K \setminus (K_1 \cup K_2)$ constitute blocks of imprimitivity for the $\Lambda$-action. It follows that $\Lambda_j$ admits a subgroup of index $\le 6$ which preserves each of the blocks. Denoting the sizes of these blocks by $a k, b k$ and $r k$ respectively with $a+b+r=1$, and bearing in mind that $a,b\le 0.51$ and $r\le 0.4$ one immediately derives that $r\le 0.24$ since otherwise $|\Lambda|$ would be bounded above by $6(ak)!(bk)!(rk)!\le 0.49^{k} k!$ contradicting the fact that the index $[\Sym(K):\Lambda]\le c\cdot 2^{k}$. Thus, the second alternative in Proposition~\ref{prop:index,d=2} holds.
\end{proof}

%------------------------------

\section{The proof of Proposition~\ref{prop:index}}

Our goal in this section is to prove Proposition~\ref{prop:index} for an arbitrary $d$ (independently of   the previous subsection).
Consider a set $K$ of size $k$ and a subgroup $\Gn< \Sym(K)$ such that  $[\Sym(K):\Gn] \in O(d^k)$.
Given $\varepsilon>0$, an orbit will be called \textbf{$\varepsilon$-large} if it is of size at least $\varepsilon|K|$.

\begin{lem}\label{large:orbits}
For each $\delta \in (0, 1]$, there is some $\varepsilon=f_1(d,\delta)>0$ such that whenever $k=|K|$ is sufficiently large, the $\varepsilon$-large orbits cover  at least a proportion of $(1-\delta)$ of the set $K$.
\end{lem}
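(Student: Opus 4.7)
The plan is to derive a lower bound on the index $[\Sym(K):\Lambda]$ from the orbit partition of $\Lambda$ and compare it with the hypothesis $[\Sym(K):\Lambda] \in O(d^k)$. Let $O_1,\dots,O_r$ denote the $\Lambda$-orbits on $K$ with sizes $n_i = |O_i|$. Since $\Lambda$ preserves each orbit set-wise, the inclusion $\Lambda \hookrightarrow \prod_i \Sym(O_i)$ gives $|\Lambda| \leq \prod_i n_i!$, and hence
$$
[\Sym(K):\Lambda] \;\geq\; \frac{k!}{\prod_i n_i!}.
$$

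I argue by contradiction. Suppose that for some candidate $\varepsilon>0$, the union of $\Lambda$-orbits of size strictly less than $\varepsilon k$ has cardinality $s\geq \delta k$. Let $B=\{i : n_i < \varepsilon k\}$ and $L=\{i : n_i \geq \varepsilon k\}$, so that $\sum_{i\in B} n_i = s$ and $\sum_{i\in L} n_i = k-s$. Using the super-multiplicativity $\prod_{i\in L} n_i! \leq (k-s)!$, the elementary bound $n! \leq n^n$, and the defining inequality $n_i < \varepsilon k$ for $i\in B$, I would estimate
$$
\log\prod_i n_i! \;\leq\; \sum_{i\in B} n_i\log n_i \;+\; (k-s)\log(k-s) \;\leq\; s\log(\varepsilon k) + (k-s)\log k \;=\; k\log k + s\log\varepsilon.
$$
Combining with Stirling's formula $\log k! = k\log k - k + O(\log k)$ then yields
$$
\log[\Sym(K):\Lambda] \;\geq\; -s\log\varepsilon - k - O(\log k) \;\geq\; \delta k\log(1/\varepsilon) - k - O(\log k).
$$

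Finally I would choose $\varepsilon = f_1(d,\delta)$ small enough that $\delta\log(1/\varepsilon) > \log d + 2$, for instance $\varepsilon := (ed)^{-2/\delta}$, so that for $k$ sufficiently large the lower bound above exceeds $k\log d + O(1)$, contradicting the hypothesis $[\Sym(K):\Lambda] \in O(d^k)$. The conclusion is then that the union of $\Lambda$-orbits of size at least $\varepsilon k$ has cardinality at least $(1-\delta)k$, which is exactly the assertion of the lemma. The argument is essentially elementary: no deep input is needed beyond the factorial bounds and Stirling; the only point to watch is absorbing the $O(\log k)$ error into the "$k$ sufficiently large" clause, and ensuring that the exponent $1/\delta$ in the choice of $\varepsilon$ is taken large enough to beat the constant $\log d$.
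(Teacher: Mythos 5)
Your proof is correct and follows essentially the same route as the paper: both arguments lower-bound the index $[\Sym(K):\Lambda]$ by the multinomial coefficient attached to the orbit partition and show that too much mass in orbits of size below $\varepsilon k$ forces this bound to exceed $c\cdot d^{k}$. The only difference is bookkeeping -- you take logarithms, lump the large orbits together via $\prod_{i\in L} n_i!\leq (k-s)!$, and apply Stirling to $k!$, whereas the paper restricts to the union of small orbits and estimates that multinomial directly; your computation is, if anything, a little cleaner.
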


\begin{proof}
We claim that taking $\varepsilon=f_1(d,\delta)= \frac{\delta}{100(d+1)^{1/\delta}}$ is sufficient.
Suppose that the assertion does not hold. Then there is a subset $Z=\bigcup_{i=1}^t Z_i\subset K$ of size $|Z|> \delta |K|$ which is covered by orbits  $\{Z_i\}_{i=1}^t$ with $|Z_i| < \varepsilon|K|$ for each $i$. This implies that the restriction of $\Gn$ to $Z$ is of index at least
$$\left(
 \begin{array}{c}
 |Z| \\ |Z_1|, |Z_2|,\dots,|Z_t| \end{array}\right).$$
 This multinomial coefficient is thus also a lower bound on the index $[\Sym(K) : \Lambda]$. 
 Let us denote $z=|Z|$ and $z_i=|Z_i|$. Thus we have for each $1\le i \le t$ that $\frac{z}{z_i}>100(d+1)^{1/\delta}$ and hence that $\frac{z}{z_i+1}>50(d+1)^{1/\delta}$.
 Recall the estimate:
 $$
 \left(\frac{m}{e}\right)^m \le m !  \le \left(\frac{m+1}{e}\right)^{m+1} (m+1).
 $$
Hence we have:
\begin{eqnarray*}
&&
 \left(
 \begin{array}{c}
z \\ z_1,z_2,\dots,z_t \end{array}\right) \ge
\frac{\left(\frac{z}{e}\right)^z}{\prod_{i=1}^t\left(\left(\frac{z_i + 1}{e}\right)^{z_i}( z_i + 1)\right)}=
\\ &&
= \frac{z^z}{\prod_{i=1}^t\left( (z_i + 1)^{z_i}( z_i + 1)\right)}
= \prod_{i=1}^t\left( (\frac{z}{ z_i+1})^{z_i}(z_i+1)^{-1}\right) \ge
\\&&
\ge \prod_{i=1}^t\left( (50(d+1)^{1/\delta})^{z_i}(z_i+1)^{-1}\right) = (50(d+1)^{1/\delta})^{z}\prod_{i=1}^t (z_i+1)^{-1} \ge
\\&&
\ge (50(d+1)^{1/\delta})^{z}\prod_{i=1}^t (2z_i)^{-1} \ge \frac{(50(d+1)^{1/\delta})^{z}}{ (2z/t)^t},
\end{eqnarray*}
where the last inequality follows from the inequality $\frac{ a_1 + \dots + a_t } t \geq \sqrt[t]{a_1 \dots a_t}$ between the arithmetic and geometric means. 
 The denominator of the last term is maximal when $t=\frac{2z}{e}$, hence we deduce:
 \begin{eqnarray*}
 &&
 \frac{(50(d+1)^{1/\delta})^{z}}{ (2z/t)^t}  \ge  \frac{(50(d+1)^{1/\delta})^{z}}{ e^{2z/e}}\ge
  (10(d+1)^{1/\delta})^{z}\ge
 \\&&
 \ge (10(d+1)^{1/\delta})^{\delta|K|}\ge (d+1)^k.
 \end{eqnarray*}

 This lower bound however is too large compared to the bound $[\Sym(K):\Gn] \in O(d^k)$.
\end{proof}

\begin{lem}\label{limited:blocks}
There is some $f_2(d,\varepsilon)$ such that if $k=|K|$ is sufficiently large, for any $\Lambda$-orbit $Y\subset K$ whose size is at least $\varepsilon |K|$, any non-trivial  $\Lambda$-invariant block decomposition of $Y$ contains at most $f_2(d,\varepsilon)$ blocks.
\end{lem}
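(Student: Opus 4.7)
The plan is to exploit the fact that a non-trivial block decomposition of $Y$ places $\Lambda$ inside a \emph{small} subgroup of $\Sym(K)$ (small relative to $k!$) and then confront this with the hypothesis $[\Sym(K):\Lambda] \le c \cdot d^k$.

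More precisely, suppose $Y$ is a $\Lambda$-orbit of size $|Y| \ge \varepsilon k$ carrying a non-trivial $\Lambda$-invariant block decomposition with $b$ blocks of common size $m$, so that $bm = |Y|$ and $m, b \ge 2$. Viewed inside $\Sym(K)$, the group $\Lambda$ is then contained in the stabiliser of the partition of $K$ formed by the $b$ blocks together with the singletons of $K \setminus Y$, namely $(\Sym(m) \wr \Sym(b)) \times \Sym(K \setminus Y)$, which has order $(m!)^b \, b! \, (k-|Y|)!$. Hence
$$
c \cdot d^k \;\ge\; [\Sym(K):\Lambda] \;\ge\; \frac{k!}{(m!)^b \, b! \, (k-|Y|)!} \;\ge\; \frac{|Y|!}{(m!)^b \, b!}.
$$

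A direct Stirling estimate, using $|Y|! \ge (|Y|/e)^{|Y|}$, $m! \le m^m$ and $b! \le b^b$, gives
$$
\frac{|Y|!}{(m!)^b \, b!} \;\ge\; \frac{(bm)^{bm}}{e^{bm} \, m^{bm} \, b^b} \;=\; \frac{b^{|Y|-b}}{e^{|Y|}},
$$
so that $b^{|Y|-b} \le c \cdot (de)^k$, using $|Y| \le k$. Non-triviality of the decomposition forces $m \ge 2$, hence $b \le |Y|/2$ and $|Y| - b \ge |Y|/2 \ge \varepsilon k / 2$. Taking logarithms yields
$$
\tfrac{\varepsilon}{2} \, k \, \log b \;\le\; \log c + k \log(de),
$$
and for $k$ sufficiently large the right-hand side is at most $\tfrac{3}{2} k \log(de)$. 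This gives $\log b \le (3/\varepsilon) \log(de)$, so $b \le (de)^{3/\varepsilon}$. The choice $f_2(d,\varepsilon) := (de)^{3/\varepsilon}$ therefore works.

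I do not anticipate a substantive obstacle: the argument is a clean index computation combined with Stirling. The only point that requires care is to use the non-triviality of the block decomposition ($m \ge 2$) to ensure that $|Y| - b$ is a definite fraction of $|Y|$, so that the left-hand side of the log inequality above grows linearly in $k$ and dominates the right-hand side as soon as $b$ is large.
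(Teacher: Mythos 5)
Your proof is correct and follows essentially the same approach as the paper: bound $[\Sym(K):\Lambda]$ from below by the index of the wreath-product stabiliser $\Sym(m)\wr\Sym(b)$ of the block system and compare with $c\cdot d^{k}$ via Stirling. Your bookkeeping is in fact cleaner than the paper's, which lower-bounds the same quantity by an auxiliary function $g(b)$ and argues via its unimodularity on $[1,y/2]$, whereas your single estimate $b^{|Y|-b}e^{-|Y|}$ handles the whole range of $b$ at once.
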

\begin{proof}
A group acting transitively on a set $Y$ and whose action preserves a block decomposition with $b$ blocks is of index at least $|Y|!/(b! ((|Y|/b)!)^b)$ in $\Sym(Y)$. {This quantity is thus also  a lower bound on $[\Sym(K) : \Lambda]$. We remark that since the block decomposition is non-trivial, we have $b \in (1, \frac y 2]$, where $y = |Y| \geq \varepsilon k$. }

We shall now estimate the function $f(b)= \frac{y!}{(b!((y/b)!)^b}$ in the range $b\in [1,\frac{y}{2}]$.
To this end, we use Stirling's approximation under the following form:
\begin{itemize}
\item
 For every $m\in \NN$ we have $m! \ge \sqrt{2\pi m} \left(\frac{m}{e}\right)^m$.
 \item There is some constant $c>1$ so that for every $m\in \NN$ we have $m! \le c \sqrt{2\pi m} \left(\frac{m}{e}\right)^m$.
 \end{itemize}
Using these we have:
\begin{eqnarray*}
&&
 \frac{y!}{b!((y/b)!)^b} \ge \frac{\sqrt{2\pi y}\left(\frac{y}{e}\right)^y}{c\sqrt{2\pi b}\left(\frac{b}{e}\right)^b\left(c\sqrt{2\pi\frac{y}{b}}\left(\frac{y}{b e}\right)^{y/b}\right)^b} =
 \\ &&
 =
  \frac{\sqrt{2\pi y}\left(\frac{y}{e}\right)^y}{c\sqrt{2\pi b}\left(\frac{b}{e}\right)^b\left(c\sqrt{2\pi\frac{y}{b}}\right)^b\left(\frac{y}{b e}\right)^{y}}  =
 \frac{\sqrt{y/b}}{c}  \cdot \frac{b^y}{((c/e)\sqrt{2\pi y b})^b} >
 \\&&
 >
 \frac{1}{(100c)^y} \cdot \frac{b^y}{\sqrt{y b}^b} =: g(b).
 \end{eqnarray*}

Next we claim that the function $g(b)$ (for any fixed $y$) is \textbf{unimodular} in the range $b\in[1,y/2]$, \emph{i.e.} it increases monotonically till it reaches some maximum and then decreases monotonically. 
In particular, for all $b_0 \in [1, y/2]$ and $b \in [b_0, y/2]$, we have $g(b) \geq \min \{ g(b_0), g(y/2)\}$. 

Now, at the rightmost point $b=y/2$ we have
$$
g(y/2)= \frac{1}{(100c)^y} \cdot \frac{(y/2)^y}{\sqrt{y^2/2}^{(y/2)}}\ge
\left(\frac{\sqrt{y}}{400c}\right)^y.
$$
Hence for  $y\ge (400c(3(d+1))^{1/\varepsilon})^2$ we have $g(y/2)\ge ((3(d+1))^{1/\varepsilon})^y\ge (3(d+1))^k$. (Note that we may assume $y\ge (400c(3(d+1))^{1/\varepsilon})^2$ is satisfied since the right hand side is a constant.)

Consider  next  $b_0= 300c(d+1)^{1/\varepsilon}$. We have
\begin{eqnarray*}
&&
g(b_0)= g( 300c(d+1)^{1/\varepsilon})=
 \frac{1}{(100c)^y} \cdot \frac{(300c(d+1)^{1/\varepsilon})^y}{\sqrt{y\cdot 300c(d+1)^{1/\varepsilon}}^{300c(d+1)^{1/\varepsilon}}}
 =
 \\&&
 =
  \frac{(3(d+1)^{1/\varepsilon})^y}{\sqrt{y\cdot 300c(d+1)^{1/\varepsilon}}^{300c(d+1)^{1/\varepsilon}}}=
   \frac{3^y}{\sqrt{y\cdot 300c(d+1)^{1/\varepsilon}}^{300c(d+1)^{1/\varepsilon}}} (d+1)^{y/\varepsilon}.
 \end{eqnarray*}

Hence assuming, as we may, that $k$ (and hence $y$) is larger than a fixed (computable) constant we have
$g( b_0) \ge (d+1)^k$. Thus, for all $b \in  [b_0, y/2]$, we have $g(b) \geq \min \{ (d+1)^k, (3(d+1))^k\}= (d+1)^k $. This gives a lower bound on the index $[\Sym(K) : \Lambda]$ which is larger than $O(d^k)$. It follows that we must have $b < b_0$. In other words, we may choose the requested constant $f_2(d, \varepsilon)$ to equal $b_0=300c(d+1)^{1/\varepsilon}$.

It remains to show the unimodularity of the function $g(x)$ in the interval $[1,y/2]$. As $y$ is fixed we need to consider the function
$$g_1(x)=\frac{x^y}{\sqrt{yx}^x}.$$
It is more convenient to consider its logarithm:
$$h(x)= \log g_1(x) = y\log x - \frac{x}{2}\log(yx)
=(y-\frac{x}{2})\log x -\frac{x}{2}\log y.
$$
Computing its derivative we have:
$$
h'(x)=\frac{y}{x} -\frac{1}{2} - \frac{1}{2}\log x -\frac{\log y}{2}.
$$
This is a monotonly decreasing function of $x>0$ and hence the function $g(x)$ is unimodular.
\end{proof}

\begin{prop} \label{prop:L}
For all $c,d>0$ and $\delta>0$, there is some $C$ such that the following holds.
For every large enough finite set $K$ and for every subgroup $\Lambda<\Sym(K)$ with
$[\Sym(K):\Lambda] \leq c\cdot d^{|K|}$
there exists a collection $\cL$   of pairwise disjoint subsets of $K$,
satisfying the following properties:
\begin{enumerate}[(1)]
  \item\label{large:cover} $|\bigcup_{Z\in\cL} Z | \geq (1-\delta)|K|$.

  \item $[\Sym(\bigcup_{Z\in\cL} Z):\prod_{Z\in\cL} \Alt(Z)] \leq C\cdot d^{|K|}$.

  \item $\prod_{Z\in \cL} \Alt(Z)< \Lambda$.

\end{enumerate}
Furthermore, there exist $\varepsilon_0$ and $V_0$ (also depending only on $c,d$ and $\delta$) such that
$|\cL|\le V_0$ and for each $Z\in \cL$, $|Z|\ge \varepsilon_0 |K|$.
\end{prop}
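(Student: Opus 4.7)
The plan is to extract $\mathcal{L}$ from the natural orbit-and-block structure of $\Lambda$ on $K$ and then invoke Babai's classification of primitive permutation groups of small index. First I apply Lemma~\ref{large:orbits} with parameter $\delta$, obtaining $\varepsilon := f_1(d,\delta)$ such that the $\varepsilon$-large $\Lambda$-orbits $Y_1,\ldots,Y_s$ cover at least $(1-\delta)|K|$, with $s \leq 1/\varepsilon$.

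On each such orbit $Y_j$, Lemma~\ref{limited:blocks} bounds the number of blocks in any non-trivial $\Lambda$-invariant decomposition by $N := f_2(d,\varepsilon)$. Since every chain of proper refinements then has length at most $\log_2 N$, there is a unique finest non-singleton $\Lambda$-invariant block decomposition $\mathcal{B}_j$ of $Y_j$, whose blocks $B$ each satisfy $|B| \geq |Y_j|/N \geq \varepsilon|K|/N$ and on which the setwise stabilizer $\Lambda_B := \{g \in \Lambda : gB = B\}$ acts primitively. Since $[\Lambda:\Lambda_B] \leq N$, a routine index calculation gives $[\Sym(B) : \Lambda_B|_B] \leq c'D^{|B|}$ for constants $c', D$ depending only on $c, d, \delta$. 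Invoking Babai's results \cite{Babai81, Babai82} on primitive groups of small index, for $|K|$ sufficiently large this forces $\Lambda_B|_B \supseteq \Alt(B)$.

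It remains to promote these restriction-level containments to a genuine subgroup relation $\prod_{Z \in \mathcal{L}} \Alt(Z) < \Lambda$ with each factor acting trivially outside $Z$, where $\mathcal{L} := \bigsqcup_j \mathcal{B}_j$. I would analyse the projection from $\Lambda \cap \bigcap_{Z \in \mathcal{L}} \mathrm{Stab}(Z)$ to $\prod_{Z \in \mathcal{L}} \Sym(Z)$, which by the previous paragraph surjects onto each factor $\Alt(Z)$. A Goursat-style argument, using the simplicity of $\Alt(Z)$ (valid since $|Z| \geq \varepsilon|K|/N \geq 5$ for large $|K|$) together with the fact that a subdirect product of pairwise non-isomorphic non-abelian simple groups is the full direct product, shows that the image contains $\prod_Z \Alt(Z)$; lifting this back using the same simplicity produces the required subgroup of $\Lambda$. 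Conditions~(1), (2) and the uniform bounds $|\mathcal{L}| \leq N/\varepsilon =: V_0$ and $|Z| \geq \varepsilon|K|/N =: \varepsilon_0|K|$ then follow by routine bookkeeping.

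I expect the main obstacle to be precisely this Goursat step: diagonal embeddings may arise when distinct blocks have the same cardinality (so $\Alt(B) \cong \Alt(B')$), forcing either a slight adjustment of block sizes, a grouping of equal-sized blocks into larger $Z$'s on which one only extracts a diagonal alternating group, or the discarding of problematic members of $\mathcal{L}$ without spoiling condition~(1). Controlling the loss in the latter operation is what ultimately pins down the admissible $\varepsilon_0$ and $V_0$.
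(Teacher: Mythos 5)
Your first two steps coincide with the paper's: extract the $\varepsilon$-large orbits via Lemma~\ref{large:orbits}, pass to a block decomposition with the maximal number of blocks (bounded by Lemma~\ref{limited:blocks}), note that each block stabiliser acts primitively on its block with index in $\Sym(K)$ still of order $O(d^{|K|})$, and invoke Babai to get $\Alt(Z)$ in the restriction. (A small quibble: a \emph{finest} non-trivial block system need not be unique; you only need to fix one system with the maximal number of blocks, which is what the paper does.) The genuine gap is exactly where you suspect it is, and in this setting it is not an edge case: all blocks of a single orbit $Y_j$ have the \emph{same} cardinality $|Y_j|/m_j$, so "pairwise non-isomorphic simple factors" fails systematically whenever $m_j\ge 2$, and the Goursat step as you state it never applies in the imprimitive case. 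Moreover, none of your three proposed repairs works: block sizes cannot be "adjusted" (they are dictated by the action); grouping equal-sized blocks and extracting only a diagonal alternating group does not yield property~(3), since a diagonal copy of $\Alt(m)$ in $\Alt(Z_1)\times\Alt(Z_2)$ is not of the form $\prod\Alt(Z)$ and does not contain $\Alt(Z_1\cup Z_2)$; and discarding all blocks involved in diagonals could in principle discard everything, destroying property~(1).

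The paper's resolution is quantitative rather than structural, and it is the second essential use of the index hypothesis. One checks first that $M:=\Lambda\cap\prod_{Z\in\cL}\Alt(Z)$ projects onto each factor (using that $M$ is normal in the finite-index subgroup of $\Lambda$ preserving all blocks, together with simplicity of $\Alt(Z)$). Then $M$ is a subdirect product of the non-abelian simple groups $\Alt(Z)$; by the structure theory of such subdirect products, if $M$ is proper then it involves a diagonal over at least two factors, hence
$$\bigl[\textstyle\prod_{Z\in\cL}\Alt(Z):M\bigr]\;\ge\;\min_{Z\in\cL}|\Alt(Z)|\;\ge\;\tfrac{1}{2}(\varepsilon_0|K|)!\,. $$
But this index is at most $[\Sym(K):\Lambda]\le c\cdot d^{|K|}$, and $(\varepsilon_0|K|)!$ grows super-exponentially in $|K|$, a contradiction for $|K|$ large. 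So diagonals are excluded by counting, not by non-isomorphy of the factors, and $\prod_{Z\in\cL}\Alt(Z)<\Lambda$ holds with the full collection $\cL$ intact; property~(2) then follows from property~(3) and the bound on $|\cL|$. Without this (or an equivalent) argument your proof does not close.
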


The proof of Proposition~\ref{prop:L} relies on the following result of L.~Babai.

\begin{thm}[L.~Babai \cite{Babai81} and \cite{Babai82}] \label{Babai:primitive:size}
 A primitive subgroup  $L<\Sym(n)$ which does not contain the alternating group $\Alt(n)$ satisfies:
\begin{itemize}
  \item $|L|<e^{4\sqrt{n}\log^2 n}$ if $L$ is not $2$-transitive, and
  \item $|L|< e^{e^{c\sqrt{\log n}}}$ if $L$ is $2$-transitive.\qed
\end{itemize}
\end{thm}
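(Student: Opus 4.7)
The plan is to extract from $\Lambda$ a collection of ``primitive blocks'' on which Babai's structure theorem (Theorem~\ref{Babai:primitive:size}) forces the local action of $\Lambda$ to be the full alternating group, and then to pull these alternating groups back inside $\Lambda$ itself. First I apply Lemma~\ref{large:orbits} with the prescribed $\delta$ to produce $\varepsilon$-large $\Lambda$-orbits $Y_1,\ldots,Y_s$, with $\varepsilon=f_1(d,\delta)$, whose union covers at least $(1-\delta)|K|$; note $s\le 1/\varepsilon$. Within each $Y_i$ I choose a subgroup of $\Lambda|_{Y_i}$ minimal among those properly containing a point stabilizer. The resulting $\Lambda|_{Y_i}$-invariant block decomposition $\{B_{i,1},\ldots,B_{i,r_i}\}$ has the property that the setwise stabilizer of any block acts \emph{primitively} on that block. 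By Lemma~\ref{limited:blocks} one has $r_i\le f_2(d,\varepsilon)$, so each block has size $m_i\ge\varepsilon|K|/f_2$.

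Let $K_{i,j}\le\Sym(B_{i,j})$ denote the primitive image of the setwise stabilizer of $B_{i,j}$. The key claim is that $K_{i,j}\supseteq\Alt(B_{i,j})$ for $|K|$ large. Combining the trivial bound $|\ker(\Lambda\to\Sym(Y_i))|\le(|K|-|Y_i|)!$ with the wreath embedding $\Lambda|_{Y_i}\hookrightarrow K_{i,j}\wr\Sym(r_i)$ and the hypothesis $[\Sym(K):\Lambda]\le c\cdot d^{|K|}$ produces, after extracting $r_i$-th roots, the lower bound $\log|K_{i,j}|\gtrsim m_i\log|K|$. Since $m_i\ge\varepsilon|K|/f_2$, this grows much faster than either of the upper bounds $e^{4\sqrt{m_i}\log^2 m_i}$ and $e^{e^{c\sqrt{\log m_i}}}$ supplied by Theorem~\ref{Babai:primitive:size} for a primitive group not containing $\Alt$. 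The ``small'' alternative of Babai's theorem is therefore excluded, forcing $K_{i,j}\supseteq\Alt(B_{i,j})$.

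Next I would pull $\Alt(B_{i,j})$ back into $\Lambda$ as a subgroup supported on $B_{i,j}$. Set $F_{i,j}:=\{g\in\Lambda:g|_{K\setminus B_{i,j}}=\mathrm{id}\}$. Then $F_{i,j}$ is normal in $\mathrm{Stab}_\Lambda(B_{i,j})$, and its faithful image in $\Sym(B_{i,j})$ is normal in $K_{i,j}$. Simplicity of $\Alt(B_{i,j})$ forces $F_{i,j}\cap\Alt(B_{i,j})$ to equal either $\{e\}$ or $\Alt(B_{i,j})$; in the first case $F_{i,j}$ injects into $K_{i,j}/\Alt(B_{i,j})\le\mathbb{Z}/2$, so $|F_{i,j}|\le 2$. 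To exclude this trivial case, note that $[\Lambda:\mathrm{Stab}_\Lambda(B_{i,j})]\le r_i\le f_2$ and that the restriction map $\mathrm{Stab}_\Lambda(B_{i,j})\to\Sym(K\setminus B_{i,j})$ has kernel exactly $F_{i,j}$, so
\[
|F_{i,j}|\ge\frac{|\Lambda|}{f_2\cdot(|K|-m_i)!}\ge\frac{(|K|/2)^{m_i}}{c\cdot d^{|K|}\cdot f_2},
\]
which exceeds $2$ for $|K|$ large because $m_i\log(|K|/2)$ eventually dominates $|K|\log d$. Therefore $\Alt(B_{i,j})\le\Lambda$ for every $i,j$.

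Finally I would take $\cL:=\{B_{i,j}\}_{i,j}$ and verify the three properties. Property (3) holds by construction; property (1) holds because $\bigcup B_{i,j}=\bigcup_i Y_i$ covers at least $(1-\delta)|K|$; the bounds $|\cL|\le s\cdot f_2\le f_2/\varepsilon=:V_0$ and $|B_{i,j}|\ge(\varepsilon/f_2)|K|=:\varepsilon_0|K|$ also follow. For property (2), since $U:=\bigcup B_{i,j}$ is $\Lambda$-invariant one gets $[\Sym(U):\Lambda|_U]\le c\cdot d^{|K|}/\binom{|K|}{|U|}\le c\cdot d^{|K|}$, while $\Lambda|_U\le\prod_i\Sym(m_i)\wr\Sym(r_i)$ contains $\prod\Alt(B_{i,j})$ with bounded index $2^{\sum r_i}\prod r_i!$; multiplying the two gives $[\Sym(U):\prod\Alt(B_{i,j})]\le C\cdot d^{|K|}$. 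The hardest step in the program is the lifting in the third paragraph, where the quantitative lower bound on $|F_{i,j}|$ is essential to decide in favor of the ``alternating'' branch of the Goursat-type dichotomy.
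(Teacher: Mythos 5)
Your proposal does not actually address the statement you were asked to prove. The statement is Babai's theorem itself (Theorem~\ref{Babai:primitive:size}): the bounds $|L|<e^{4\sqrt{n}\log^2 n}$ for a uniprimitive group $L<\Sym(n)$ not containing $\Alt(n)$, and $|L|<e^{e^{c\sqrt{\log n}}}$ in the $2$-transitive case. In the paper this result is quoted from \cite{Babai81} and \cite{Babai82} with no proof given (the \qed marks it as an external citation). What you have written is instead a sketch of Proposition~\ref{prop:L} --- producing $\varepsilon$-large orbits via Lemma~\ref{large:orbits}, passing to a primitive block decomposition via Lemma~\ref{limited:blocks}, and lifting alternating groups into $\Lambda$ --- and, crucially, the second paragraph of your argument \emph{invokes} Theorem~\ref{Babai:primitive:size} as a black box to rule out the ``small'' alternative for the primitive image $K_{i,j}$. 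So with respect to the assigned statement the argument is circular: nowhere do you establish either of the two order bounds for a primitive group not containing the alternating group.

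A genuine proof of the uniprimitive bound requires Babai's combinatorial analysis (via minimal degree and the existence of a base of size $O(\sqrt{n}\log n)$, which yields $|L|\le n^{O(\sqrt{n}\log n)}=e^{O(\sqrt{n}\log^2 n)}$), and the doubly transitive bound requires a separate argument (Babai's 1982 paper, or the later simpler proof of Pyber cited in the paper); alternatively one can deduce sharper bounds from the Classification of the Finite Simple Groups as in \cite{Maroti}, but the point of citing Babai is precisely to avoid the Classification. None of this material appears in your proposal. As a side remark, your sketch of Proposition~\ref{prop:L} is broadly in the spirit of the paper's own proof of that proposition (large orbits, bounded number of primitive blocks, Babai's bound forcing $\Alt$ on each block, then a Goursat/simplicity argument to land the product of alternating groups inside $\Lambda$), but that is a different statement from the one under review, and correctness of that sketch does not repair the gap here.
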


%(Actually a weaker bounds of $c^n$ due to Wielandt for the first case and $4^n$ in the second case may also suffice for what we need but I am not sure)

We point out that these estimates can be strengthened to $|L| < 50 e^{\sqrt n \log n}$ using the Classification of the Finite Simple Groups, see \cite[Cor.~1.1(ii)]{Maroti}. The bounds given by Babai's theorem (which is independent of the Classification of the Finite Simple Groups) will however be sufficient for our purposes.
Note also that the better estimate  $n^{c(logn)^2}$ for the case of $2$-transitive groups has been obtained in \cite{Pyber} (with a simpler argument).

\begin{proof}[Proof of Proposition~\ref{prop:L}]
Denote $k=|K|$ and assume it is large enough so that Lemma~\ref{large:orbits} and Lemma~\ref{limited:blocks} hold.
Let $\varepsilon=f_1(d,\delta)$ be as in Lemma~\ref{large:orbits}.
Let $Y_j$, $j=1,\dots,m$ be all the $\varepsilon$-large orbits.
Note that $m\leq 1/\varepsilon$.
By Lemma~\ref{large:orbits}, we have $|\bigcup_{j=1}^m Y_j|\geq (1-\delta)k$.
For each $1\le j \le m$ let $\{ Y_{j,i} \st 1\le i \le m_j\}$ be a block decomposition of $Y_j$ with the largest possible number of non-trivial blocks.
Set
$$\cL=\{Y_{j,i}~|~j=1,\ldots,m~;~i=1,\ldots,m_j\}. $$
Property (1) of the proposition is clear.

Set $V=f_2(d,\varepsilon)$ as given by Lemma~\ref{limited:blocks} and $V_0=V/\varepsilon$. Then we have $m_j\le V$ for each $j$ and
$|\cL|\leq mV\leq V_0$.
Also for each $j$ we have $|Y_j|\geq \varepsilon k$ and  $Y_{j,i} = |Y_j| / m_j \geq   \varepsilon k/ V$.
Setting $\varepsilon_0=\varepsilon/V$ we have proven the ``furthermore" part of the proposition.

Observe that the index of $\Lambda$ in $\Sym(K)$ gives an upper bound on the index of the restriction $\bar\Lambda$ of $\Gn$ to $\bigcup_{j=1}^m Y_j$ in $\Sym(\bigcup_{j=1}^m Y_j)$. Thus
$$[\Sym(\bigcup_{j=1}^m Y_j):\bar\Lambda]\le [\Sym(K):\Lambda] \leq c\cdot d^k.$$
Assume property (3) holds.
Then we have
$$[\bar\Lambda:\prod_{j=1}^m \prod_{i=1}^{m_j} \Alt(Y_{j,i})]\le \prod_{j=1}^m(m_j ! 2^{m_j})\le (V!2^{V})^{1/\varepsilon}.$$
Thus property (2) follows for $C=c\cdot(V!2^V)^{1/\varepsilon}$.

We are left to prove property (3).
For each $1\le j \le m$ and $1 \le i \le m_j$, we consider the action
of $\Gn$ on the orbit $Y_j$.
%The kernel of the homomorphism from $\Gn$ to $\Sym(\{Y_{j,1},\dots,Y_{j,m_j}\}$ is a normal subgroup $\Gn_j$ of index bounded by $m_j! < f(d,\varepsilon)!$.
Let $\Gn_{j,i}$ be the subgroup which preserves the block $Y_{j,i}$. Remark that we have the index bound  $[\Gn:\Gn_{j,i}]\le m_j \le V$.
Hence $[\Sym(K):\Gn_{j,i}]$ is bounded by $O(d^k)$. By definition the $Y_{j,i}$'s provide the finest $\Lambda$-invariant block decomposition of $Y_j$. This implies that the action of $\Gn_{j,i}$ on $Y_{j,i}$, $1\le i\le m_j$ is primitive.

Using the fact that each of the sets $Y_{j,i}$ is of size $|Y_{j,i}|>  \varepsilon_0|K|$,  it follows from Babai's estimates from Theorem~\ref{Babai:primitive:size} that the restriction of $\Gn_{j,i}$ to $Y_{j,i}$ must contain the corresponding alternating group $\Alt(Y_{j,i})$, otherwise we would deduce that the index of $\Gn_{j,i}$ in $\Sym(K)$ is not bounded by $O(d^k)$. 

We infer that the intersection of $\Gn$ with   $\prod_{j=1}^m \prod_{i=1}^{m_j} \Alt(Y_{j,i})$ is a subgroup which projects onto each of the non-abelian, simple factors $\Alt(Y_{j,i})$. Such a subgroup is either the full product or is of index which is at least the order of some $\Alt(Y_{j,i})$. The latter possibility is excluded because $|\Alt(Y_{j,i})|$ has a much bigger growth rate than  the index $[\Sym(K):\Gn] \in O(d^k)$.
\end{proof}

Recall the definition of the \textbf{entropy function}: For $\alpha_i\ge 0$, $\sum_{i=1}^s \alpha_i=1$, this is the function
$$
H(\alpha_1,\ldots,\alpha_s)=-\sum_{i=1}^s \alpha_i \log_2 \alpha_i.
$$
For a fixed $s$, the multinomial coefficient satisfies
 $$
 \lim_{n\to\infty}\frac{ \log_2 \left(\begin{array}{c}
n
 \\
 \alpha_1 n,\alpha_2 n,\dots,\alpha_s n\end{array}\right)}
{nH(\alpha_1,\alpha_2,\dots,\alpha_s)} = 1.
$$
Hence given $\beta>0$ for sufficiently large $n$
$$
\left(\begin{array}{c}
n
 \\
 \alpha_1 n,\alpha_2 n,\dots,\alpha_s n\end{array}\right)
 \ge
\left(2^{nH(\alpha_1,\alpha_2,\dots,\alpha_s)}\right)^{1-\beta}
$$

\begin{proof}[Proof of Proposition~\ref{prop:index}]

Let $c,d$ and $\alpha$ be given.
Without loss of generality we may assume $\alpha<\frac{1}{d}$. 
Our goal is to show that
there is a choice of $\delta$ (depending on $c,d$ and $\alpha$) such that for each large enough finite set $K$ and every collection $\cL$ of subsets of $K$ afforded by  Proposition~\ref{prop:L} with that choice of $\delta$,
either $\cL$ contains a set of size greater than $\frac{|K|}{d}+2$
or $\cL$ contains $d$ sets whose union is of size greater than $(1-\alpha)|K|$. The conclusion of Proposition~\ref{prop:index} will thus follow.

Observe that
$$ d= 2^{H(\frac{1}{d},\frac{1}{d},\dots,\frac{1}{d})} < 2^{H(\frac{1}{d},\frac{1}{d},\dots,\frac{1}{d},\frac{1}{d}-\frac{\alpha}{2},\frac{\alpha}{2})}. $$
Fix $\tilde{d}$ such that $d< \tilde{d} < 2^{H(\frac{1}{d},\frac{1}{d},\dots,\frac{1}{d},\frac{1}{d}-\frac{\alpha}{2},\frac{\alpha}{2})}$,
and choose $\delta>0$ and $\beta>0$ small enough such that
$$
\delta< \frac{\alpha}{2} \quad \mbox{and} \quad \left(\frac{1}{d}-\alpha\right)^{\delta}
 \left(2^{H(\frac{1}{d},\frac{1}{d},\dots,\frac{1}{d},\frac{1}{d}-\frac{\alpha}{2},\frac{\alpha}{2})}\right)^{1-\beta}>\tilde{d}.
$$
We claim that this choice of $\delta$ will do the job.
We will prove it by a contradiction.
We will assume that
there is no set in $\cL$ of size greater that $\frac{|K|}{d}+2$
and that the union of the $d$ largest sets in $\cL$ covers at most $(1-\alpha)$ of the set $K$.

Set $k=|K|$. By Property (2) of Proposition \ref{prop:L} we have
$$[\Sym(\bigcup_{Z\in\cL} Z):\prod_{Z\in\cL} \Alt(Z)] \leq C\cdot d^{k}.$$
Denoting $\cL=\{Z_1,\ldots,Z_t\}$ and recalling that $t\le V_0$, this index coincides, up to a constant, with a multinomial coefficient
$$
\left(
\begin{array}{c}
 |\bigcup Z_i| \\ |Z_1|, |Z_2|,\dots,|Z_t| \end{array}\right)
  $$
We will derive the contradiction by estimating this multinomial coefficient from below, showing that it is too big.

Let us denote
 $z_i=|Z_i|$, $1\le i\le t$ and $z=|\bigcup Z_i|$. Assume, as we may, that $z_1 \ge z_2 \ge z_3 \ge \dots\ge z_t$.
Note that by the assumption we have $z_i\le \frac{k}{d}+2$ for all $1\le i\le t$.
Since $\delta<\frac{\alpha}{2}$ Property (1) of Proposition~\ref{prop:L} implies that $\sum_{j\ge d+1} z_j \ge \frac{\alpha k}{2}$.
In a multinomial coefficient if we keep all the terms fixed except for two terms which we change so that their sum is fixed but their difference increases, then the total value decreases.
We claim that
applying this repeatedly we can deduce that our multinomial is bounded below by:
\begin{equation}\label{minimal:multinomial}
\left(\begin{array}{c}
 z \\ a_1,a_2,\dots,a_{d-1},b,\frac{\alpha k}{2} \end{array}\right)
\end{equation}
where for $1\le i \le d-1$,
$\frac{k}{d} \le a_i \le \frac{k}{d}+2$  and $b= z- \sum a_i - \frac{\alpha k}{2}$. (Note that if some of the fractions above are not integers one has to take nearby integers.)
This is done by an inductive process. Suppose we have already shown that:
\begin{equation*}\label{intermediate:multinomial}
\left(\begin{array}{c}
 z \\ z_1,z_2,\dots,z_t \end{array}\right)
 \ge
 \left(\begin{array}{c}
 z \\ a_1,a_2,\dots,a_{i-1},z_i,z_{i+1},\dots,z_s \end{array}\right)
\end{equation*}
and $i\le d-1$. Then we may transfer as much as we can from the last term $z_s$ to the term $z_i$ as long as the latter does not increase beyond $\frac{k}{d}$ and as long as $\sum_{j\ge d+1} z_j \ge \frac{\alpha k}{2}$.
 If $z_s$ becomes $0$, we remove it.
Once the entry $z_i$ is increased to $\frac{k}{d}$ call it $a_i$ and repeat until $i=d$ or
 $\sum_{j\ge d+1} z_j = \frac{\alpha k}{2}$ .

 If we reached $i=d$ (and $\sum_{j\ge d+1} z_j \ge \frac{\alpha k}{2}$)  increase $z_d$ in the same way till it is  equal to
  $z- \sum_{i=1}^{d-1} a_i - \frac{\alpha k}{2}$. Then one may collect all the remaining terms from the $d+1$ place onward together and get one term which will be $\frac{\alpha k}{2}$.
  If along the process we reached $\sum_{j\ge d+1} z_j = \frac{\alpha k}{2}$ with $i\le d-1$ then we can collect all the terms from the $d+1$ place onwards together to form one term which is equal to $\frac{\alpha k}{2}$ and then we can ``transfer mass" from the term $z_d$ to those entries $z_i,\dots,z_{d-1}$ which are still smaller then $\frac{k}{d}$.
Using this process, we eventually reach the form~(\ref{minimal:multinomial}) and we have

\begin{equation*}\label{intermediate:multinomial:second}
\left(\begin{array}{c}
 z \\ z_1,z_2,\dots,z_t \end{array}\right)
 \ge
 \left(\begin{array}{c}
 z \\ a_1,a_2,\dots,a_{d-1},b,\frac{\alpha k}{2} \end{array}\right)
\end{equation*}
as claimed. Observe now that
since for $1\le i \le d-1$ we have $\frac{k}{d} \le a_i \le \frac{k}{d}+2$ it follows that for some fixed polynomial $p(x)$  (say, $p(x)= (x/d+2)^{d-1}$) we have
\begin{equation}
 \left(\begin{array}{c}
 z \\ a_1,a_2,\dots,a_{d-1},b,\frac{\alpha k}{2} \end{array}\right)
\ge\frac{1}{p(k)}
 \left(\begin{array}{c}
 z \\\frac{k}{d},\frac{k}{d},\dots,\frac{k}{d}, (z-(\frac{d-1}{d}+\frac{\alpha}{2})k), \frac{\alpha k}{2} \end{array}\right)
\end{equation}

We claim that the choice of $\delta$ guarantees that the last multinomial coefficient grows as $\tilde{d}^k$ (which allows us to absorb (i.e. ignore) the polynomial factor $\frac{1}{p(k)}$).
 Using that $z> (1-\delta)k$ and $\delta<\frac{\alpha}{2}$ we have for $k$ large enough:

{\allowdisplaybreaks
\begin{eqnarray*}
&&
\left(\begin{array}{c}
 z \\\frac{k}{d},\frac{k}{d},\dots,\frac{k}{d}, (z-(\frac{d-1}{d}+\frac{\alpha}{2})k), \frac{\alpha k}{2} \end{array}\right)
 \ge
 \\&&
 \ge
 \left(\frac{(z-(\frac{d-1}{d}+\frac{\alpha}{2})k)}{k}\right)^{k-z}
 \left(\begin{array}{c}
 k \\\frac{k}{d},\frac{k}{d},\dots,\frac{k}{d}, (\frac{1}{d}-\frac{\alpha}{2})k, \frac{\alpha k}{2} \end{array}\right)
 \ge
 \\&&
  \ge
 \left((1-\delta)-(\frac{d-1}{d}+\frac{\alpha}{2})\right)^{k-z}
 \left(\begin{array}{c}
 k \\\frac{k}{d},\frac{k}{d},\dots,\frac{k}{d}, (\frac{1}{d}-\frac{\alpha}{2})k, \frac{\alpha k}{2} \end{array}\right)
 \ge
 \\&&
  \ge
 \left(\frac{1}{d}-\delta-\frac{\alpha}{2}\right)^{\delta k}
 \left(\begin{array}{c}
 k \\\frac{k}{d},\frac{k}{d},\dots,\frac{k}{d}, (\frac{1}{d}-\frac{\alpha}{2})k, \frac{\alpha k}{2} \end{array}\right)
 \ge
 \\&&
  \ge
 \left(\frac{1}{d}-\alpha\right)^{\delta k}
 \left(\begin{array}{c}
 k \\\frac{k}{d},\frac{k}{d},\dots,\frac{k}{d}, (\frac{1}{d}-\frac{\alpha}{2})k, \frac{\alpha k}{2} \end{array}\right)
 \ge
 \\&&
 \ge
  \left(\left(\frac{1}{d}-\alpha\right)^{\delta}\right)^ k
 \left(2^{H(\frac{1}{d},\frac{1}{d},\dots,\frac{1}{d},\frac{1}{d}-\frac{\alpha}{2},\frac{\alpha}{2})}\right)^{(1-\beta)k} =
 \\&&
 =
  \left(\left(\frac{1}{d}-\alpha\right)^{\delta}
 \left(2^{H(\frac{1}{d},\frac{1}{d},\dots,\frac{1}{d},\frac{1}{d}-\frac{\alpha}{2},\frac{\alpha}{2})}\right)^{1-\beta}\right)^k > \tilde{d}^k.
  \end{eqnarray*}
  }
Since $\tilde{d}>d$ this clearly contradicts Property (2) of Proposition \ref{prop:L}.

Note that we have used  that for sufficiently large $k$ 
$$
\left(\begin{array}{c}
 k \\\frac{k}{d},\frac{k}{d},\dots,\frac{k}{d}, (\frac{1}{d}-\frac{\alpha}{2})k, \frac{\alpha k}{2} \end{array}\right)
 \ge
  \left(2^{H(\frac{1}{d},\frac{1}{d},\dots,\frac{1}{d},\frac{1}{d}-\frac{\alpha}{2},\frac{\alpha}{2})}\right)^{(1-\beta)k}. 
 $$
\end{proof}

{\footnotesize
%\bibliographystyle{amsplain}
%\bibliography{BCGM}

\providecommand{\bysame}{\leavevmode\hbox to3em{\hrulefill}\thinspace}
\providecommand{\MR}{\relax\ifhmode\unskip\space\fi MR }
% \MRhref is called by the amsart/book/proc definition of \MR.
\providecommand{\MRhref}[2]{%
  \href{http://www.ams.org/mathscinet-getitem?mr=#1}{#2}
}
\providecommand{\href}[2]{#2}

}

\bigskip
{\footnotesize
\hfill Uri Bader, Technion Haifa, Israel; \texttt{bader@tx.technion.ac.il}

\hfill  Pierre-Emmanuel Caprace, UCLouvain, Belgium; \texttt{pe.caprace@uclouvain.be}

\hfill  Tsachik Gelander, Hebrew University Jerusalem, Israel; \texttt{gelander@math.huji.ac.il}

\hfill  Shahar Mozes, Hebrew University Jerusalem, Israel; \texttt{mozes@math.huji.ac.il}}
\end{document}